\renewcommand{\baselinestretch}{\baselinestretch}
\renewcommand{\baselinestretch}{1.1}
\numberwithin{equation}{section}
\newtheorem{thm}{Theorem}[section]
\newtheorem{lem}[thm]{Lemma}
\newtheorem{cor}[thm]{Corollary}
\newtheorem{prop}[thm]{Proposition}
\theoremstyle{definition}
\theoremstyle{remark}
\numberwithin{equation}{section}
\newcommand{\ra}{{\ \longrightarrow \ }}
\newcommand{\ratwo}{{\ \overset{2}{\longrightarrow} \ }}
\newcommand{\nratwo}{{\ \overset{2}{\longarrownot\longrightarrow} \ }}
\newcommand{\gen}{\text{gen}}
\newcommand{\n}{{\mathbb N}}
\newcommand{\z}{{\mathbb Z}}
\newcommand{\q}{{\mathbb Q}}
\newcommand{\Mod}[1]{\ (\mathrm{mod}\ #1 )}
\begin{document}
\title[]{Tight universal triangular forms}

\author{Mingyu Kim}

\address{Department of Mathematics, Sungkyunkwan University, Suwon 16419, Korea}
\email{kmg2562@skku.edu}

\thanks{This work was supported by the National Research Foundation of Korea(NRF) grant funded by the Korea government(MSIT) (NRF-2021R1C1C2010133).}

\subjclass[2020]{Primary 11E12, 11E20; Secondary 11P99} \keywords{tight universal, polygonal numbers}
\begin{abstract}
For a subset $S$ of nonnegative integers and a vector $\mathbf{a}=(a_1,\dots,a_k)$ of positive integers, let $V'_S(\mathbf{a})=\{ a_1s_1+\cdots+a_ks_k : s_i\in S\} \setminus \{0\}$.
For a positive integer $n$, let $\mathcal T(n)$ be the set of integers greater than or equal to $n$.
In this paper, we consider the problem of finding all vectors $\mathbf{a}$ satisfying $V'_S(\mathbf{a})=\mathcal T(n)$, when $S$ is the set of (generalized) $m$-gonal numbers and $n$ is a positive integer.
In particular, we completely resolve the case when $S$ is the set of triangular numbers.
\end{abstract}
\maketitle

\section{Introduction}
For a positive integer $m$ greater than or equal to 3, the polynomial
$$
P_m(x)=\frac{(m-2)x^2-(m-4)x}{2}
$$
is an integer-valued quadratic polynomial.
Note that the $s$-th $m$-gonal number is given by $P_m(s)$ for nonnegative integers $s$.
For a vector $\mathbf{a}=(a_1,a_2,\dots,a_k)$ of positive integers, a polynomial of the form
$$
p_m(\mathbf{a})=p_m(\mathbf{a})(x_1,\dots,x_k)=a_1P_m(x_1)+\cdots+a_kP_m(x_k)
$$
in variables $x_1,x_2,\dots,x_k$ is called {\it a $k$-ary $m$-gonal form} (or {\it a $k$-ary sums of generalized $m$-gonal numbers}).
We say an integer $N$ is represented by an $m$-gonal form $p_m(\mathbf{a})$ if the equation 
$$
p_m(\mathbf{a})(x_1,\dots,x_k)=N
$$
has an integer solution.
The smallest positive integer represented by $p_m(\mathbf{a})$ will be denoted by $\min(p_m(\mathbf{a}))$ and we call it {\it the minimum of $p_m(\mathbf{a})$}.
We call an $m$-gonal form {\it tight universal} if it represents every positive integer greater than its minimum.
A tight universal $m$-gonal form having minimum 1 is simply called {\it universal}.
The universal $m$-gonal forms have been studied by many mathematicians and there are several results on the classification problem (see, for example, \cite{BK},\cite{Ju5},\cite{JuOh8} and \cite{Dj}).
Note that $P_4(x)=x^2$ and the classification of universal diagonal quadratic forms can be easily done by using Conway-Schneeberger 15-Theorem(see \cite{B} and \cite{C}).

Recently, Oh and the author \cite{KO6} studied (positive definite integral) quadratic forms which represent every positive integer greater than the minimum of each quadratic forms.
We called such a quadratic form $f$ {\it tight $\mathcal T(n)$-universal}, where $n$ is the minimum of the quadratic form $f$.
In that paper, we classified ``diagonal" tight universal quadratic forms, i.e., the classification of tight universal $m$-gonal forms in the case of $m=4$ has already been done.

We follow the notations and terminologies used in \cite{KO6}.
For $n=1,2,3,\dots$, we denote by $\mathcal T(n)$ the set of integers greater than $n$.
We also say an $m$-gonal form is tight $\mathcal T(n)$-universal if it is tight universal with minimum $n$.
In Section 3, we resolve the classification problem of tight $\mathcal T(n)$-universal $m$-gonal forms in the following cases;
\begin{enumerate} [(i)]
\item $m=5$,\ \ $n\ge 7$;
\item $m=7$,\ \ $n\ge 11$;
\item $m\ge 8$,\ \ $n\ge 2m-5$.
\end{enumerate}
In fact, it will be proved that there are ``essentially" two tight $\mathcal T(n)$-universal $m$-gonal forms in the cases of (ii) and (iii).
It will also be shown that there is (essentially) unique tight $\mathcal T(n)$-universal pentagonal forms for any $n\ge 7$.
In addition, we classify tight $\mathcal T(n)$-universal sums of $m$-gonal numbers (for definition, see Section 3).
In Section 4, we classify tight universal triangular forms by finding all tight $\mathcal T(n)$-universal triangular forms for every integer $n\ge 3$.
Note that universal triangular forms were classified in \cite{BK} and tight $\mathcal T(2)$-universal triangular forms were found in \cite{Ju}.
To classify tight universal triangular forms, we use the theory of quadratic forms and adapt the geometric language of quadratic spaces and lattices, generally following \cite{Ki} and \cite{OM}.
Some basic notations and terminologies will be given in Section 2.

\section{Preliminaries}
Let $R$ be the ring of rational integers $\z$ or the ring of $p$-adic integers $\z_p$ for a prime $p$ and let $F$ be the field of fractions of $R$.
An \textit{$R$-lattice} is a finitely generated $R$-submodule of a quadratic space $(W,Q)$ over $F$.
We let $B:W\times W\to F$ be the symmetric bilinear form associated to the quadratic map $Q$ so that $B(x,x)=Q(x)$ for every $x\in W$.
For an element $a$ in $R$ and an $R$-lattice $L$, we say {\it $a$ is represented by $L$ over $R$} and write $a\ra L$ over $R$ if $Q(\mathbf{x})=a$ for some vector $\mathbf{x}\in L$.

Let $L$ be a $\z$-lattice on a quadratic space $W$ over $\q$.
The genus of $L$, denoted $\gen(L)$, is the set of all $\z$-lattices on $W$ which are locally isometric to $L$.
The number of isometry classes in $\gen(L)$ is called the class number of $L$ and denoted by $h(L)$.
If an integer $a$ is represented by $L$ over $\z_p$ for all primes $p$ (including $\infty$), then there is a $\z$-lattice $K$ in $\gen(L)$ such that $a\ra K$ (for this, see \cite[102:5]{OM}).
In this case, we say $a$ is represented by the genus of $L$ and write $a\ra \gen(L)$.
For a $\z$-basis $\{ \mathbf{v}_1,\mathbf{v}_2,\dots,\mathbf{v}_k\}$ of $L$, the corresponding quadratic form $f_L$ is defined as
$$
f_L=\sum_{i,j=1}^k B(\mathbf{v}_i,\mathbf{v}_j)x_ix_j.
$$
If $L$ admits an orthogonal basis $\{\mathbf{w}_1,\mathbf{w}_2,\dots,\mathbf{w}_k\}$, then we simply write
$$
L\simeq \langle Q(\mathbf{w}_1),Q(\mathbf{w}_2),\dots,Q(\mathbf{w}_k)\rangle.
$$
In this article, we abuse the notation and the diagonal quadratic form $a_1x_1^2+a_2x_2^2+\cdots+a_kx_k^2$ will also be denoted by $\langle a_1,a_2,\dots,a_k\rangle$.
The scale ideal of $L$ is denoted by $\mathfrak{s}(L)$.
Throughout the paper, we always assume that every $\z$-lattice is positive definite and primitive in the sense that $\mathfrak{s}(L)=\z$.
Any unexplained notations and terminologies on the representation of quadratic forms can be found in \cite{Ki} or \cite{OM}.

Throughout this section, $S$ always denotes a subset of nonnegative integers containing 0 and 1, unless otherwise stated.
For a vector $\mathbf{a}=(a_1,a_2,\dots,a_k)\in \n^k$, we define
$$
V_S(\mathbf{a})=\{ a_1s_1+a_2s_2+\cdots+a_ks_k : s_i\in S\}
$$
and define $V'_S(\mathbf{a})=V_S(\mathbf{a})-\{0\}$.
For example, if $S$ is the set of squares of integers, then
$$
V'_S(1,1,1,1)=\n,\quad V'_S(1,1,1)=\n-\left\{4^a(8b+7) : a,b\in \n_0\right\}
$$
by Lagrange's four-square theorem and Legendre's three-square theorem, respectively.
We denote the set of nonnegative integers by $\n_0$ for simplicity.
For two vectors $\mathbf{u}=(u_1,u_2,\dots,u_r)\in \n^r$ and $\mathbf{v}=(v_1,v_2,\dots,v_s)\in \n^s$, we write
$$
\mathbf{u}\preceq \mathbf{v}\ (\mathbf{u}\prec \mathbf{v})
$$
if $\{u_i\}_{1\le i\le r}$ is a subsequence (proper subsequence, respectively) of $\{v_j\}_{1\le j\le s}$.
Let $n$ be a positive integer and let $\mathbf{a}$ be a vector of positive integers.
We say {\it $\mathbf{a}$ is tight $\mathcal T(n)$-universal with respect to $S$} if $V'_S(\mathbf{a})=\mathcal T(n)$.
When $n=1$, we simply say {\it $\mathbf{a}$ is universal with respect to $S$}.
We say {\it $\mathbf{a}$ is new tight $\mathcal T(n)$-universal with respect to $S$} if $V'_S(\mathbf{a})=\mathcal T(n)$, and $V'_S(\mathbf{b})\subsetneq \mathcal T(n)$ whenever $\mathbf{b}\prec \mathbf{a}$.
For simplicity, we use the following notation throughout the article.
For $n_1,n_2,\dots,n_r\in \n$ and $e_1,e_2,\dots,e_r\in \n_0$, we denote by 
$$
\mathbf{n_1}^{e_1}\mathbf{n_2}^{e_2}\cdots \mathbf{n_r}^{e_r}
$$
the vector
$$
(n_1,\dots,n_1,n_2,\dots,n_2,\dots,n_r,\dots,n_r)\in \z^{e_1+e_2+\cdots+e_r},
$$
where each $n_i$ is repeated $e_i$ times for $i=1,2,\dots,r$.

\begin{lem} \label{lemeasy}
Let $\mathbf{a},\mathbf{b}$ be vectors of positive integers such that $\mathbf{a}\preceq \mathbf{b}$ and let $S, S'$ be subsets of nonnegative integers containing 0 and 1 such that $S\subseteq S'$.
Then we have the followings;
\begin{enumerate} [(i)]
\item $V_S(\mathbf{a})\subseteq V_S(\mathbf{b})$;
\item $V_S(\mathbf{a})\subseteq V_{S'}(\mathbf{a})$;
\item $V_S(u+v)\subset V_S(u,v)$ for any $u,v\in \n$;
\item $\min(V'_S(\mathbf{a}))=\min\{a_i : 1\le i\le k\}$, where $\mathbf{a}=(a_1,a_2,\dots,a_k)$.
\end{enumerate}
\end{lem}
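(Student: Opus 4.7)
The plan is to verify each of the four statements directly from the definitions, using only the standing hypothesis $\{0,1\}\subseteq S\subseteq\n_0$. None of the four assertions requires more than a one-line argument, so I will record the key move for each; there is no real obstacle to overcome, and the lemma is essentially bookkeeping that will be invoked repeatedly later.

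For (i), if $\mathbf{a}\preceq\mathbf{b}$ then $\mathbf{b}$ arises from $\mathbf{a}$ by inserting finitely many additional coordinates. Given $a_1s_1+\cdots+a_ks_k\in V_S(\mathbf{a})$, I would extend it to an element of $V_S(\mathbf{b})$ by assigning the value $0\in S$ to each of the inserted variables; the sum is unchanged. For (ii), every coefficient drawn from $S$ is automatically drawn from $S'$, so the containment $V_S(\mathbf{a})\subseteq V_{S'}(\mathbf{a})$ is tautological.

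For (iii), the inclusion $V_S(u+v)\subseteq V_S(u,v)$ follows from the identity $(u+v)s=us+vs$ (the same $s\in S$ is used on both sides). To see the inclusion is strict I would exhibit an element of $V_S(u,v)\setminus V_S(u+v)$: the element $u$ itself, obtained via $u=u\cdot 1+v\cdot 0$, lies in $V_S(u,v)$ because $0,1\in S$, while $u=(u+v)s$ with $s\in S\subseteq\n_0$ forces either $s=0$ (so $u=0$, impossible as $u\in\n$) or $s\ge 1$ (so $(u+v)s\ge u+v>u$, contradiction).

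For (iv), each $a_i$ lies in $V'_S(\mathbf{a})$ via the representation $a_i=a_i\cdot 1+\sum_{j\ne i}a_j\cdot 0$, which shows $\min V'_S(\mathbf{a})\le\min\{a_i\}$. Conversely, any nonzero element $a_1s_1+\cdots+a_ks_k$ of $V'_S(\mathbf{a})$ has some $s_{i_0}\ne 0$, hence $s_{i_0}\ge 1$ since $S\subseteq\n_0$; then the sum is at least $a_{i_0}\ge\min\{a_i\}$, giving the reverse inequality. The only subtle point is not to confuse the subsequence relation $\mathbf{a}\preceq\mathbf{b}$ with a set-theoretic inclusion of multisets of coordinates, but since the subsequence definition provides a concrete embedding of index sets, the padding argument for (i) is unambiguous.
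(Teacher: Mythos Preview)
Your proof is correct; the paper itself simply writes ``Trivial.'' and omits all details, so your verification is exactly the routine unpacking the authors had in mind.
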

\begin{proof}
Trivial.
\end{proof}

\begin{lem} \label{lem123}
Let $\mathbf{a}=\mathbf{1}^{e_1}\mathbf{2}^{e_2}\mathbf{3}^{e_3}$ be a vector with a positive integer $e_1$ and nonnegative integers $e_2$ and $e_3$.
Assume that $V_S(\mathbf{a})=\n_0$.
Then for any integer $n\ge 2e_3+3$, the vector
$$
\mathbf{b}=\mathbf{n}^{e_1}\mathbf{n+1}^1\mathbf{n+2}^1\cdots \mathbf{2n-1}^1\mathbf{2n}^{e_2}
$$
is tight $\mathcal T(n)$-universal with respect to $S$.
\end{lem}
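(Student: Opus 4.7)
The inclusion $V'_S(\mathbf{b}) \subseteq \mathcal{T}(n)$ is immediate from Lemma \ref{lemeasy}(iv), since the smallest entry of $\mathbf{b}$ equals $n$. The substantive direction is $\mathcal{T}(n) \subseteq V'_S(\mathbf{b})$, which I plan to establish by producing, for each integer $N \geq n$, an explicit representation of $N$ by $\mathbf{b}$ with coefficients in $S$.

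The engine of the argument is the identity $(n+j) + (2n-j) = 3n$, valid for $1 \leq j \leq n-1$ with $j \neq n-j$: assigning a common value $u \in S$ to the two slots of $\mathbf{b}$ indexed by the entries $n+j$ and $2n-j$ contributes $3nu$ to the total. Thus the middle block of $\mathbf{b}$ can simulate $e_3$ copies of an entry $3n$ provided one can choose $e_3$ pairwise disjoint index pairs $\{j, n-j\} \subseteq \{1,\dots,n-1\}$. The total number of such pairs is $\lfloor (n-1)/2 \rfloor$, which under the hypothesis $n \geq 2e_3 + 3$ is at least $e_3 + 1$ in both parities of $n$, giving one pair of slack.

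Given $N \geq n$, I would write $N = Qn + r$ with $Q \geq 1$ and $0 \leq r \leq n-1$, and split into two cases. If $r = 0$, the assumption $V_S(\mathbf{a}) = \n_0$ lets me expand $Q = \sum s_i + 2\sum t_j + 3\sum u_k$ with all summands in $S$; I then place the $s_i$ in the $n$-slots, the $t_j$ in the $2n$-slots, and each $u_k$ in both members of a chosen pair $\{j_k, n - j_k\}$, setting the remaining middle slots to $0$. This realizes $nQ = N$. If $r \neq 0$, I first set the coefficient of the entry $n + r$ equal to $1 \in S$ and apply the same scheme to represent the residual $(Q-1)n$, drawing the $e_3$ pairs from those not containing $r$; the slack established above makes this possible.

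The only nontrivial step --- really the heart of the argument --- is this pair-counting: one must check that $\lfloor (n-1)/2 \rfloor \geq e_3 + 1$ holds in both parities of $n$ under $n \geq 2e_3 + 3$, and that removing the pair potentially containing $r$ still leaves at least $e_3$ usable pairs. The hypothesis is calibrated precisely for this and is tight when $n$ is odd. Everything else is routine bookkeeping, with $0, 1 \in S$ supplying the ``zero out unused slots, put a unit at the $n+r$ slot'' moves.
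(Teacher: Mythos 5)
Your proof is correct and takes essentially the same route as the paper's: both decompose $N$ as a multiple of $n$ plus one reserved slot in $\{n,\dots,2n-1\}$, represent the quotient via $V_S(\mathbf{1}^{e_1}\mathbf{2}^{e_2}\mathbf{3}^{e_3})=\n_0$ scaled by $n$, and split each resulting $3n$ across a disjoint pair of middle entries summing to $3n$, with $n\ge 2e_3+3$ supplying exactly the one spare pair needed to dodge the reserved slot. Your separate treatment of the case $r=0$ is, if anything, slightly cleaner than the paper's handling of the corresponding case $v=n$.
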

\begin{proof}
Let $n$ be an integer with $n\ge 2e_3+3$ and let $m$ be an integer greater than or equal to $n$.
Then $m$ can be written in the form $un+v$ for a nonnegative integer $u$ and an integer $v$ with $n\le v\le 2n-1$.
To prove the lemma, it suffices to show that $un+v\in V_S(\mathbf{b})$.
Since
$$
u\in \n_0 =V_S(\mathbf{a})=V_S(\mathbf{1}^{e_1}\mathbf{2}^{e_2}\mathbf{3}^{e_3}),
$$
we have 
$$
un\in V_S(\mathbf{n}^{e_1}\mathbf{2n}^{e_2}\mathbf{3n}^{e_3}).
$$
On the other hand, by applying Lemma \ref{lemeasy}(iii) $e_3$ times, we have
$$
V_S(\mathbf{3n}^{e_3})\subseteq V_S(n+1,2n-1,n+2,2n-2,\dots,\widehat{v},\widehat{3n-v},\dots,n+e_3+1,2n-e_3-1),
$$
where the hat symbol $\hat{}$ indicates that the component is omitted.
From these follows that
\begin{align*}
un&\in V_S(\mathbf{n}^{e_1}\mathbf{2n}^{e_2}\mathbf{3n}^{e_3})\\
&\subseteq V_S(\mathbf{n}^{e_1}\mathbf{2n}^{e_2}\mathbf{n+1}^1\mathbf{2n-1}^1\cdots \widehat{\mathbf{v}^1}\widehat{\mathbf{3n-v}^1}\cdots \mathbf{n+e_3+1}^1\mathbf{2n-e_3-1}^1).
\end{align*}
Therefore, we have
\begin{align*}
un+v&\in V_S(\mathbf{n}^{e_1}\mathbf{2n}^{e_2}\mathbf{n+1}^1\mathbf{2n-1}^1\cdots \mathbf{v}^1\mathbf{3n-v}^1\cdots \mathbf{n+e_3+1}^1\mathbf{2n-e_3-1}^1)\\
&\subseteq V_S(\mathbf{n}^{e_1}\mathbf{n+1}^1\mathbf{n+2}^1\cdots \mathbf{2n-1}^1\mathbf{2n}^{e_2}).
\end{align*}
This completes the proof.
\end{proof}

For $n=1,2,3,\dots$, we define vectors $\mathbf{x}_n, \mathbf{y}_n\in \z^{n+1}$ as
$$
\mathbf{x}_n=(n,n,n+1,n+2,\cdots,2n-1),\quad \mathbf{y}_n=(n,n+1,n+2,\cdots,2n).
$$

\begin{lem} \label{lemxy}
Let $n$ be a positive integer and let $\mathbf{a}=(a_1,a_2,\dots,a_k)\in \n^k$ with $a_1\le a_2\le \cdots \le a_k$ such that $V'_S(\mathbf{a})=\mathcal T(n)$. Then we have $(n,n+1,n+2,\dots,2n-1)\preceq \mathbf{a}$.
Furthermore, if $2\not\in S$, then $\mathbf{x}_n\preceq \mathbf{a}$ or $\mathbf{y}_n\preceq \mathbf{a}$.
\end{lem}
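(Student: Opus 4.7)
The plan is to show that every integer in $\{n,n+1,\dots,2n-1\}$ (and, in the furthermore part, also $2n$) must literally occur as a coordinate of $\mathbf{a}$, by examining each of its representations in $V'_S(\mathbf{a})$.

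First I would invoke Lemma~\ref{lemeasy}(iv) applied to $V'_S(\mathbf{a})=\mathcal T(n)$ to conclude $a_1=n$. Then I fix an integer $m$ with $n\le m\le 2n-1$ and take any representation $m=\sum_{i=1}^k a_is_i$ with $s_i\in S$. Since every nonzero $s_i$ is at least $1$ and every $a_i\ge n$, every nonzero summand $a_is_i$ is at least $n$; because $m<2n$, exactly one summand can be nonzero, say $a_is_i=m$. Combining $a_i\ge n$, $s_i\ge 1$, and $m<2n$ then forces $s_i=1$ and $a_i=m$. Since $\mathbf{a}$ is nondecreasing and every integer from $n$ to $2n-1$ thus appears among its entries, we get $(n,n+1,\dots,2n-1)\preceq\mathbf{a}$.

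For the furthermore part, I would run the same counting argument on a representation $2n=\sum_{i=1}^k a_is_i$, which again admits at most two nonzero summands. If exactly one summand is nonzero, say $a_is_i=2n$, then the three possibilities $s_i=1$, $s_i=2$, $s_i\ge 3$ give respectively $a_i=2n$; a contradiction with the hypothesis $2\notin S$; and $a_i\le 2n/3<n$, which is impossible. Hence $2n$ must appear in $\mathbf{a}$, so combined with the first part we obtain $\mathbf{y}_n\preceq\mathbf{a}$. If two summands are nonzero, they sum to $2n$ with each at least $n$, so each equals $n$, which forces $s_{i_j}=1$ and $a_{i_j}=n$ for two distinct indices $i_1\ne i_2$; thus $n$ appears at least twice in $\mathbf{a}$, and the first part then yields $\mathbf{x}_n\preceq\mathbf{a}$.

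The argument is elementary throughout; the only subtle point—and the only place where $2\notin S$ is genuinely used—is the exclusion of $(a_i,s_i)=(n,2)$ in the single-summand representation of $2n$, which is exactly what creates the dichotomy between $\mathbf{x}_n$ and $\mathbf{y}_n$.
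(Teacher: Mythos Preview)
Your proof is correct and follows essentially the same approach as the paper's own proof: both use $a_1=n$ together with the bound $a_i\ge n$ to force a unique-summand representation for each $v\in\{n,\dots,2n-1\}$, and then analyze the representation of $2n$ to obtain the $\mathbf{x}_n$/$\mathbf{y}_n$ dichotomy. Your treatment of the furthermore part is slightly more explicit than the paper's (which simply asserts $2n\notin V_S(n,n+1,\dots,2n-1)$ and leaves the deduction to the reader), but the underlying argument is identical.
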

\begin{proof}
Since $V'_S(\mathbf{a})=\mathcal T(n)$, we have
\begin{equation} \label{eqxy1}
n=a_1\le a_2\le \cdots \le a_k.
\end{equation}
Thus, to prove the first assertion, if suffices to show that for any integer $v$ with $n+1\le v\le 2n-1$, there is an integer $j_v$ with $1\le j_v\le k$ such that $a_{j_v}=v$.
Let $v$ be an integer such that $n+1\le v\le 2n-1$.
Since $v\in V'_S(\mathbf{a})$, we have
$$
v=a_1s_1+a_2s_2+\cdots+a_ks_k
$$
for some $s_1,s_2,\dots,s_k\in S$.
Since $v>0$, there is an integer $j_v$ with $1\le j_v\le k$ such that $s_{j_v}>0$.
If $s_l>0$ for some $l$ different form $j_v$, then
$$
v=a_1s_1+a_2s_2+\cdots+a_ks_k\ge a_{j_v}s_{j_v}+a_ls_l\ge a_{j_v}+a_l\ge 2n
$$ by Equation \ref{eqxy1},
and this is absurd since $v\le 2n-1$.
It follows that $s_{j_v}=1$ and $s_l=0$ for any $l\neq j_v$.
Thus we have $v=a_{j_v}$ and the first assertion follows.

Now we assume further that $2\not\in S$.
Then clearly we have
$$
2n\in V_S(\mathbf{a})-V_S(n,n+1,n+2,\dots,2n-1).
$$
From this one may easily deduce that
$$
(n,n,n+1,n+2,\dots,2n-1)\preceq \mathbf{a}\ \ \text{or}\ \ (n,n+1,n+2,\dots,2n-1,2n)\preceq \mathbf{a}.
$$
This completes the proof.
\end{proof}

\section{Tight $\mathcal T(n)$-universal sums of (generalized) $m$-gonal numbers}
Let $m$ be an integer greater than or equal to 3.
We denote the set of (generalized) $m$-gonal numbers by $\mathcal{P}_m$ ($\mathcal{GP}_m$, respectively),
i.e.,
$$
\mathcal{P}_m=\left\{ \frac{(m-2)x^2-(m-4)x}2 : x\in \n_0 \right\}
$$
and
$$
\mathcal{GP}_m=\left\{ \frac{(m-2)x^2-(m-4)x}2 : x\in \z \right\}.
$$
One may easily check the followings;
\begin{enumerate} [(i)]
\item $\{0,1\} \subset \mathcal{P}_m\subseteq \mathcal{GP}_m$ for any $m\ge 3$;
\item $2\not\in \mathcal{P}_m$ for any $m\ge 3$;
\item $2\in \mathcal{GP}_m$ only if $m=5$;
\item $\mathcal{P}_3=\mathcal{GP}_3=\mathcal{GP}_6$;
\item $\mathcal{P}_4=\mathcal{GP}_4$.
\end{enumerate}

\begin{prop} \label{propgpoly}
Let $m$ be an integer greater than or equal to 8.
If $n\ge 2m-5$, then both $\mathbf{x}_n$ and $\mathbf{y}_n$ are tight $\mathcal T(n)$-universal with respect to $\mathcal{GP}_m$.
\end{prop}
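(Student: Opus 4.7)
The plan is to invoke Lemma~\ref{lem123} with $S=\mathcal{GP}_m$. Observe that $\mathbf{x}_n=\mathbf{n}^{2}\mathbf{n+1}^{1}\cdots\mathbf{2n-1}^{1}$ is precisely of the shape of the vector $\mathbf{b}$ produced by Lemma~\ref{lem123} with $(e_1,e_2)=(2,0)$, while $\mathbf{y}_n=\mathbf{n}^{1}\mathbf{n+1}^{1}\cdots\mathbf{2n-1}^{1}\mathbf{2n}^{1}$ corresponds to $(e_1,e_2)=(1,1)$. Moreover, the hypothesis $n\ge 2m-5$ matches exactly the inequality $n\ge 2e_3+3$ for the choice $e_3=m-4$. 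Consequently, the proposition reduces to establishing the two universality-over-$\n_0$ identities
\[
V_{\mathcal{GP}_m}(\mathbf{1}^{2}\mathbf{3}^{m-4})=\n_0 \quad\text{and}\quad V_{\mathcal{GP}_m}(\mathbf{1}^{1}\mathbf{2}^{1}\mathbf{3}^{m-4})=\n_0.
\]

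For each of these identities my strategy is to exploit the inclusion $\{0,1\}\subseteq\mathcal{GP}_m$. Given $N\in\n_0$, I would first choose $s_1,s_2\in\{0,1\}$ so that $s_1+s_2$ (respectively $s_1+2s_2$) has the same residue as $N$ modulo $3$: since $\{0,1,2\}$ (respectively $\{0,1,2,3\}$) realizes every residue class mod $3$, such a choice is always possible provided $N\ge 2$ (while $N\in\{0,1\}$ is handled by inspection). Once this is done, $(N-s_1-s_2)/3$, or $(N-s_1-2s_2)/3$, is a nonnegative integer, and the task is reduced to writing that quotient as a sum of $m-4$ elements of $\mathcal{GP}_m$.

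The main obstacle is therefore an auxiliary Waring-type statement: for every $m\ge 8$, every nonnegative integer is a sum of $m-4$ generalized $m$-gonal numbers. I would hope to prove this either by invoking an existing result on sums of generalized polygonal numbers from the literature, or via a direct quadratic-form argument using the genus-theoretic tools recalled in Section~2 (reducing the question to local representability and then handling a finite list of genus obstructions, exploiting the relatively large density of $\mathcal{GP}_m$ in small intervals). Once this auxiliary fact is in hand, both universality identities above hold, and the proposition follows immediately from Lemma~\ref{lem123} applied to $\mathbf{x}_n$ and $\mathbf{y}_n$.
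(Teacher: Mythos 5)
Your proof follows the paper's argument exactly: the paper likewise reduces the proposition, via Lemma~\ref{lem123} with $e_3=m-4$, to the identities $V_{\mathcal{GP}_m}(\mathbf{1}^{e_1}\mathbf{2}^{e_2}\mathbf{3}^{m-4})=\n_0$ for $(e_1,e_2)\in\{(2,0),(1,1)\}$, which it obtains from $V_{\mathcal{GP}_m}(\mathbf{1}^{m-4})=\n_0$ by the same residue-mod-$3$ selection of $s_1,s_2\in\{0,1\}$. The auxiliary Waring-type fact you isolate --- every nonnegative integer is a sum of $m-4$ generalized $m$-gonal numbers when $m\ge 8$ --- is exactly what the paper imports from the literature (Theorem~1.1 of Z.-W.~Sun and Theorem~3.2 of D.~Kim), so your outline is complete once that citation is supplied.
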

\begin{proof}
By \cite[Theorem 1.1]{S} and \cite[Theorem 3.2]{Dj}, we have
$$
V_{\mathcal{GP}_m}(\mathbf{1}^{m-4})=\n_0.
$$
From this, one may easily deduce that
$$
V_{\mathcal{GP}_m}(\mathbf{1}^{e_1}\mathbf{2}^{e_2}\mathbf{3}^{m-4})=\n_0
$$
for $(e_1,e_2)\in \{(2,0),(1,1)\}$.
Now the proposition follows immediately from Lemma \ref{lem123}.
\end{proof}

\begin{thm} \label{thmgpoly}
Let $m$ be an integer greater than or equal to 8.
If $n\ge 2m-5$, then there are exactly two new tight $\mathcal T(n)$-universal $m$-gonal forms.
\end{thm}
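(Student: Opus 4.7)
The plan is to deduce this theorem directly from Proposition~\ref{propgpoly} and Lemma~\ref{lemxy}. Because $m\ge 8$ we have $2\notin \mathcal{GP}_m$, so the stronger half of Lemma~\ref{lemxy} is available. The argument then reduces to three short steps: verify that $\mathbf{x}_n$ and $\mathbf{y}_n$ are themselves \emph{new} tight $\mathcal{T}(n)$-universal, show that any new tight $\mathcal{T}(n)$-universal vector coincides (up to permutation) with one of them, and check that the two vectors are genuinely distinct.

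First I would check newness. Both $\mathbf{x}_n$ and $\mathbf{y}_n$ are vectors of length $n+1$. If some $\mathbf{b}\prec \mathbf{x}_n$ were tight $\mathcal{T}(n)$-universal, then its sorted version would have length at most $n$; yet Lemma~\ref{lemxy} would force it to contain either $\mathbf{x}_n$ or $\mathbf{y}_n$ as a subsequence, each of length $n+1$, which is impossible. The same length argument rules out any proper subsequence of $\mathbf{y}_n$, so both vectors are new.

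For uniqueness, let $\mathbf{a}$ be any new tight $\mathcal{T}(n)$-universal vector, with entries arranged in non-decreasing order. Lemma~\ref{lemxy} forces $\mathbf{x}_n\preceq \mathbf{a}$ or $\mathbf{y}_n\preceq \mathbf{a}$. In the first case, Proposition~\ref{propgpoly} says $\mathbf{x}_n$ is already tight $\mathcal{T}(n)$-universal, so the newness of $\mathbf{a}$ rules out $\mathbf{x}_n\prec \mathbf{a}$, forcing $\mathbf{a}=\mathbf{x}_n$; the second case gives $\mathbf{a}=\mathbf{y}_n$ analogously. Finally, $\mathbf{x}_n$ and $\mathbf{y}_n$ differ as multisets—the former contains two entries equal to $n$ and no entry equal to $2n$, whereas the latter contains a single $n$ and does contain $2n$—so together they account for exactly two new tight $\mathcal{T}(n)$-universal $m$-gonal forms.

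The proof is essentially mechanical once Proposition~\ref{propgpoly} and Lemma~\ref{lemxy} are in place; the real work—producing the two universal candidates and establishing the structural constraint that any tight $\mathcal{T}(n)$-universal vector must contain $\mathbf{x}_n$ or $\mathbf{y}_n$ as a subsequence—has already been carried out, and no additional obstacle remains at this stage.
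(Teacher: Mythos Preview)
Your proof is correct and follows exactly the same approach as the paper: both invoke Proposition~\ref{propgpoly} and the second assertion of Lemma~\ref{lemxy} (using $2\notin\mathcal{GP}_m$ for $m\ge 8$), and you have simply spelled out the details that the paper's one-line proof leaves implicit—the newness of $\mathbf{x}_n$ and $\mathbf{y}_n$ via the length-count, the forced equality from newness, and the distinctness of the two vectors.
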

\begin{proof}
Note that $2\not\in \mathcal{GP}_m$ since $m\neq 5$.
The theorem follows immediately from the second assertion of Lemma \ref{lemxy} and Proposition \ref{propgpoly}.
\end{proof}

\begin{prop}
There is only one new tight $\mathcal T(n)$-universal pentagonal forms for any $n\ge 7$.
\end{prop}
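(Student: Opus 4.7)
The plan is to pinpoint the unique candidate for a new tight $\mathcal T(n)$-universal pentagonal form as $\mathbf{b}_n := (n, n+1, \dots, 2n-1)$ and then verify both halves (uniqueness and existence) of the statement. Note that the second assertion of Lemma \ref{lemxy} is unavailable here since $2 \in \mathcal{GP}_5$, so in contrast to Theorem \ref{thmgpoly} neither $\mathbf{x}_n$ nor $\mathbf{y}_n$ is forced; one only obtains that $(n, n+1, \dots, 2n-1) \preceq \mathbf{a}$. This is the reason pentagonal produces a single new form rather than two.

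Uniqueness is immediate from Lemma \ref{lemxy}(i): every tight $\mathcal T(n)$-universal pentagonal form $\mathbf{a}$, arranged in nondecreasing order, must satisfy $\mathbf{b}_n \preceq \mathbf{a}$. Once one shows that $\mathbf{b}_n$ itself is tight $\mathcal T(n)$-universal, any $\mathbf{a}$ with $\mathbf{b}_n \prec \mathbf{a}$ fails to be ``new'' because $\mathbf{b}_n$ is already a tight $\mathcal T(n)$-universal subsequence of $\mathbf{a}$. Hence $\mathbf{b}_n$ is the only possible new form.

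For existence, my approach is to apply Lemma \ref{lem123} with $S = \mathcal{GP}_5$ and $(e_1, e_2, e_3) = (1, 0, 2)$: the input vector $\mathbf{a}$ of the lemma becomes $(1, 3, 3)$, the output $\mathbf{b}$ is exactly $(n, n+1, \dots, 2n-1) = \mathbf{b}_n$, and the required bound $n \ge 2 e_3 + 3 = 7$ lines up precisely with the hypothesis of the statement. What remains is to verify the input hypothesis of Lemma \ref{lem123}, namely the universal identity
$$
V_{\mathcal{GP}_5}(1, 3, 3) = \n_0,
$$
i.e., every nonnegative integer admits a representation $s + 3(t + u)$ with $s, t, u$ generalized pentagonal numbers. (Note $e_3 = 0$ and $e_3 = 1$ both fail: $V_{\mathcal{GP}_5}(1) \neq \n_0$ trivially, and $V_{\mathcal{GP}_5}(1, 3) \neq \n_0$ since $9$ is not representable.)

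Verifying this ternary identity is the main obstacle. The natural approach is to convert to a quadratic-form problem using $24 P_5(x) + 1 = (6x-1)^2$: with $a = 6x - 1$, $b = 6y - 1$, $c = 6z - 1$, the equation $N = s + 3(t + u)$ is equivalent to
$$
24 N + 7 = a^2 + 3 b^2 + 3 c^2,
$$
where $a, b, c$ must be coprime to $6$. So the claim reduces to showing that every integer $M \equiv 7 \Mod{24}$ is represented by the form $\langle 1, 3, 3 \rangle$ with all three variables coprime to $6$. Using the $\z$-lattice framework from Section 2, one checks the local representation conditions at every prime (the conditions at $2$ and $3$ turn out to encode the congruence $M \equiv 7 \Mod{24}$ exactly) and then passes from local to global representation via a genus analysis of $\langle 1, 3, 3 \rangle$---for instance by verifying that its class number is one, or by analyzing spinor exceptions directly and handling any sporadic residual cases by hand. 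Once this representation fact is established, Lemma \ref{lem123} delivers existence, and combined with the uniqueness above the proposition follows.
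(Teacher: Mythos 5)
Your overall architecture is exactly the paper's: uniqueness from the first assertion of Lemma \ref{lemxy} (and, as you correctly observe, the second assertion is unavailable because $2\in\mathcal{GP}_5$, which is why only one new form survives), and existence from Lemma \ref{lem123} applied with $(e_1,e_2,e_3)=(1,0,2)$, input vector $(1,3,3)$, and threshold $n\ge 2e_3+3=7$. The one place you diverge is the key input $V_{\mathcal{GP}_5}(1,3,3)=\n_0$: the paper simply cites Ge and Sun \cite{GS} for this, whereas you attempt to prove it.

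That attempted proof is where the gap lies. Your reduction to representing $M=24N+7$ as $a^2+3b^2+3c^2$ with $\gcd(abc,6)=1$ is correct, but the claim that the local conditions at $2$ and $3$ ``encode the congruence $M\equiv 7\Mod{24}$ exactly'' does not hold in the form you need it. For instance, modulo $8$ a representation of $M\equiv 7\Mod 8$ need not have all three variables odd: taking $a\equiv c\equiv 0\Mod 2$ and $b$ odd gives $a^2+3b^2+3c^2\equiv 3+4\equiv 7\Mod 8$ when $a^2+3c^2\equiv 4\Mod 8$. Likewise $M\equiv 1\Mod 3$ forces $3\nmid a$ but does not prevent $3\mid b$ and $3\mid c$ simultaneously. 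Consequently, even granting that $\langle 1,3,3\rangle$ has class number one, you only obtain \emph{some} integral representation of $M$, not one with all variables coprime to $6$; passing from an arbitrary representation to one satisfying the congruence conditions is precisely the nontrivial content (it requires a separate argument in the style of Lemma \ref{lemJones} or the manipulations in Propositions \ref{prop346} and \ref{propy3}). The clean repair is to do what the paper does and quote \cite{GS} for the universality of $(1,3,3)$ with respect to $\mathcal{GP}_5$; with that substitution the rest of your argument is complete and matches the paper.
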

\begin{proof}
Note that the vector $(1,3,3)$ is universal with respect to $\mathcal{GP}_5$ (see, \cite{GS}).
By Lemma \ref{lem123}, the vector
$$
(n,n+1,n+2,\dots,2n-1)
$$
is tight $\mathcal T(n)$-universal with respect to $\mathcal{GP}_5$ for any $n\ge 7$.
Now the proposition follows immediately from the first assertion of Lemma \ref{lemxy}.
\end{proof}

\begin{prop}
There are exactly two new tight $\mathcal T(n)$-universal heptagonal forms for any $n\ge 11$.
\end{prop}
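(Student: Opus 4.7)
The proof would follow the template already used in the preceding pentagonal proposition and in Theorem~\ref{thmgpoly}. Since $m=7\ne 5$, property (iii) preceding Proposition~\ref{propgpoly} gives $2\notin\mathcal{GP}_7$, so the second assertion of Lemma~\ref{lemxy} applies. It forces every vector $\mathbf{a}$ with $V'_{\mathcal{GP}_7}(\mathbf{a})=\mathcal T(n)$ to satisfy $\mathbf{x}_n\preceq\mathbf{a}$ or $\mathbf{y}_n\preceq\mathbf{a}$. Consequently, a \emph{new} tight $\mathcal T(n)$-universal heptagonal form must coincide with $\mathbf{x}_n$ or $\mathbf{y}_n$, which bounds the count from above by two.

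For the matching lower bound I would apply Lemma~\ref{lem123}. The vector $\mathbf{x}_n$ fits the template $\mathbf{n}^{e_1}\mathbf{n+1}^1\cdots\mathbf{2n-1}^1\mathbf{2n}^{e_2}$ with $(e_1,e_2)=(2,0)$, and $\mathbf{y}_n$ fits it with $(e_1,e_2)=(1,1)$. It therefore suffices to exhibit a common nonnegative integer $e_3\le 4$ such that both $\mathbf{1}^2\mathbf{3}^{e_3}$ and $\mathbf{1}\mathbf{2}\mathbf{3}^{e_3}$ are universal with respect to $\mathcal{GP}_7$: then Lemma~\ref{lem123} promotes each of these to tight $\mathcal T(n)$-universality for all $n\ge 2e_3+3$, and the choice $e_3\le 4$ gives exactly the desired range $n\ge 11$.

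The expected candidates are $(1,1,3,3,3,3)$ and $(1,2,3,3,3,3)$, both of which should be accessible from the existing classification of universal sums of generalized heptagonal numbers (see \cite{Ju5} and \cite{JuOh8}); if the result is present there, the proof is completed by a direct citation. Failing that, the identity $40\,P_7(x)=(10x-3)^2-9$ recasts the question ``$N\in V_{\mathcal{GP}_7}(a_1,\dots,a_k)$'' as the question of whether the diagonal quadratic form $\langle a_1,\dots,a_k\rangle$ represents $40N+9(a_1+\cdots+a_k)$ by a vector each of whose coordinates is congruent to $-3$ modulo $10$. The main obstacle is this last, congruence-constrained representation step: one has to rule out local obstructions at the primes $2$ and $5$ and verify the residue conditions uniformly, typically via a genus computation on the auxiliary quaternary and senary lattices combined with a Watson-type transformation or a direct reduction of the progression modulo $40$.

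Once both universalities are secured, Lemma~\ref{lem123} immediately yields that $\mathbf{x}_n$ and $\mathbf{y}_n$ are tight $\mathcal T(n)$-universal heptagonal forms for every $n\ge 11$. Combined with the upper bound established in the first paragraph, this produces exactly two new tight $\mathcal T(n)$-universal heptagonal forms, as claimed.
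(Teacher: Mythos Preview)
Your overall architecture matches the paper's proof: the upper bound comes from the second assertion of Lemma~\ref{lemxy} (valid because $2\notin\mathcal{GP}_7$), and the lower bound comes from Lemma~\ref{lem123} applied with $e_3=4$, which yields the threshold $n\ge 2\cdot 4+3=11$. So the skeleton is exactly right.

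The gap is in the one substantive step you leave open: showing that $\mathbf{1}^2\mathbf{3}^4$ and $\mathbf{1}\,\mathbf{2}\,\mathbf{3}^4$ are universal with respect to $\mathcal{GP}_7$. Your primary plan is to cite a classification of universal generalized heptagonal sums, but the references you name, \cite{Ju5} and \cite{JuOh8}, treat $m=5$ and $m=8$, not $m=7$; there is no heptagonal classification to cite there. Your fallback, rewriting via $40P_7(x)=(10x-3)^2-9$ and analyzing congruence-constrained representations by senary diagonal forms, would be a substantial project in itself and is not carried out.

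The paper sidesteps all of this with a one-line reduction you overlooked. By Sun's theorem (or \cite[Theorem~1.2]{Dj}) one already knows $V_{\mathcal{GP}_7}(1,1,1,1)=\n_0$. Hence $V_{\mathcal{GP}_7}(3,3,3,3)\supseteq 3\n_0$, and since $\{0,1,2\}\subset V_{\mathcal{GP}_7}(1,1)\cap V_{\mathcal{GP}_7}(1,2)$ trivially (using $0,1\in\mathcal{GP}_7$), it follows at once that $V_{\mathcal{GP}_7}(\mathbf{1}^{e_1}\mathbf{2}^{e_2}\mathbf{3}^4)=\n_0$ for $(e_1,e_2)\in\{(2,0),(1,1)\}$. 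That is the missing ingredient; with it your argument becomes complete and identical to the paper's.
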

\begin{proof}
Note that $V_{\mathcal{GP}_7}(1,1,1,1)=\n_0$ (see \cite{S} or \cite[Theorem 1.2]{Dj}).
It follows that
$$
V_{\mathcal{GP}_7}(\mathbf{1}^{e_1}\mathbf{2}^{e_2}\mathbf{3}^4)=\n_0
$$
for $(e_1,e_2)\in \{(2,0),(1,1)\}$.
The proposition follows immediately from Lemma \ref{lem123} and the second assertion of Lemma \ref{lemxy}.
\end{proof}

Let $n$ be a positive integer.
Now we define (new) tight $\mathcal T(n)$-universal sums of $m$-gonal numbers.
For an integer $m\ge 3$ and a vector $\mathbf{a}$ of positive integers,
we call the pair $(\mathcal P_m,\mathbf{a})$ {\it a sum of $m$-gonal numbers}.
We say $(\mathcal P_m,\mathbf{a})$ is tight $\mathcal T(n)$-universal if $V'_{\mathcal P_m}(\mathbf{a})=\mathcal T(n)$.
A tight $\mathcal T(n)$-universal sum of $m$-gonal numbers $(\mathcal P_m,\mathbf{a})$ is called {\it new} if $(\mathcal P_m,\mathbf{b})$ is not $\mathcal T(n)$-universal whenever $\mathbf{b}\prec \mathbf{a}$, or equivalently, $V'_{\mathcal P_m}(\mathbf{b})\subsetneq \mathcal T(n)$ whenever $\mathbf{b}\prec \mathbf{a}$.

\begin{prop} \label{proppoly}
Let $m$ be an integer greater than or equal to 3.
If $n\ge 2m+3$, then both $(\mathcal P_m,\mathbf{x}_n)$ and $(\mathcal P_m,\mathbf{y}_n)$ are tight $\mathcal T(n)$-universal.
\end{prop}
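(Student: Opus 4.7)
The plan is to replicate the strategy of Proposition~\ref{propgpoly}, substituting Cauchy's (Fermat's) polygonal number theorem for the results of \cite{S} and \cite{Dj}. Cauchy's theorem states that every nonnegative integer is a sum of $m$ ordinary $m$-gonal numbers, which in our notation reads
$$
V_{\mathcal{P}_m}(\mathbf{1}^m)=\n_0 \qquad \text{for every } m\ge 3.
$$

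Starting from this, I would next verify the intermediate identities
$$
V_{\mathcal{P}_m}(\mathbf{1}^{e_1}\mathbf{2}^{e_2}\mathbf{3}^m)=\n_0 \quad\text{for } (e_1,e_2)\in\{(2,0),(1,1)\}.
$$
The idea is to use the $\mathbf{1}^{e_1}\mathbf{2}^{e_2}$ part to absorb the residue of $N$ modulo $3$, and the $m$ threes to absorb the rest via the inclusion $V_{\mathcal{P}_m}(\mathbf{3}^m)=3\,V_{\mathcal{P}_m}(\mathbf{1}^m)=3\n_0$ coming from Cauchy. Since $0,1\in\mathcal{P}_m$, one checks immediately that in both cases every residue class modulo $3$ is represented by some value $g_1+\epsilon g_2\in\{0,1,2,3\}$ with $g_1,g_2\in\{0,1\}$ and $\epsilon\in\{1,2\}$; for any $N\ge 3$ this produces a valid decomposition $N=g_1+\epsilon g_2+3k$ with $k\in\n_0$, and the exceptional cases $N\in\{0,1,2\}$ are settled by inspection.

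With these identities in hand, the vector $\mathbf{a}=\mathbf{1}^{e_1}\mathbf{2}^{e_2}\mathbf{3}^m$ (with $e_1\ge 1$) satisfies the hypothesis of Lemma~\ref{lem123}, and the assumption $n\ge 2m+3$ is exactly $n\ge 2e_3+3$ with $e_3=m$. Applying Lemma~\ref{lem123} with $S=\mathcal{P}_m$ concludes that
$$
\mathbf{b}=\mathbf{n}^{e_1}\mathbf{n+1}^1\mathbf{n+2}^1\cdots\mathbf{2n-1}^1\mathbf{2n}^{e_2}
$$
is tight $\mathcal{T}(n)$-universal with respect to $\mathcal{P}_m$; choosing $(e_1,e_2)=(2,0)$ yields $\mathbf{b}=\mathbf{x}_n$, and $(e_1,e_2)=(1,1)$ yields $\mathbf{b}=\mathbf{y}_n$.

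The only nonelementary ingredient is Cauchy's theorem; the residue bookkeeping and the reduction to Lemma~\ref{lem123} are routine, so I do not anticipate any serious obstacle.
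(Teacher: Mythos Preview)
Your argument is correct and follows essentially the same route as the paper: invoke Fermat's (Cauchy's) polygonal number theorem to get $V_{\mathcal{P}_m}(\mathbf{1}^m)=\n_0$, upgrade this to $V_{\mathcal{P}_m}(\mathbf{1}^{e_1}\mathbf{2}^{e_2}\mathbf{3}^m)=\n_0$ for $(e_1,e_2)\in\{(2,0),(1,1)\}$, and then apply Lemma~\ref{lem123} with $e_3=m$. If anything, you give more detail on the residue bookkeeping than the paper does.
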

\begin{proof}
Fermat polygonal number theorem says that
$$
V_{\mathcal{P}_m}(\mathbf{1}^m)=\n_0.
$$
From this, one may easily deduce that
$$
V_{\mathcal{P}_m}(\mathbf{1}^{e_1}\mathbf{2}^{e_2}\mathbf{3}^m)=\n_0
$$
for $(e_1,e_2)\in \{(2,0),(1,1)\}$.
Now the tight $\mathcal T(n)$-universalities (with respect to $\mathcal{P}_m$) of $\mathbf{x}_n$ and $\mathbf{y}_n$ follows immediately from Lemma \ref{lem123}.
\end{proof}

\begin{thm} \label{thmpoly}
Let $m$ be an integer greater than or equal to 3.
If $n\ge 2m+3$, then there are exactly two new tight $\mathcal T(n)$-universal sums of $m$-gonal numbers.
\end{thm}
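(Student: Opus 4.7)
The plan is to follow the same pattern as Theorem \ref{thmgpoly}: combine the existence half (Proposition \ref{proppoly}) with the subsequence restriction from Lemma \ref{lemxy}, once I verify that $2\notin \mathcal{P}_m$, which is item (ii) in the list of basic properties of polygonal numbers above.

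First I would record that both $(\mathcal{P}_m,\mathbf{x}_n)$ and $(\mathcal{P}_m,\mathbf{y}_n)$ are tight $\mathcal{T}(n)$-universal, which is exactly Proposition \ref{proppoly}. So the content is to show (a) each is \emph{new}, and (b) every new tight $\mathcal{T}(n)$-universal sum equals one of these two, after sorting entries in non-decreasing order.

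For (a), take any $\mathbf{b}\prec \mathbf{x}_n$ or $\mathbf{b}\prec \mathbf{y}_n$. If the missing entry is one of $n+1,n+2,\dots,2n-1$, then Lemma \ref{lemxy} immediately forbids tight $\mathcal{T}(n)$-universality (the sorted vector would not contain that entry). The remaining cases are: removing the second $n$ from $\mathbf{x}_n$, or removing the $2n$ from $\mathbf{y}_n$; both reduce $\mathbf{b}$ to $(n,n+1,\dots,2n-1)$. I would then argue directly that $2n\notin V'_{\mathcal{P}_m}(n,n+1,\dots,2n-1)$: any representation $2n=\sum a_i s_i$ with $s_i\in \mathcal{P}_m$ and $a_i\ge n$ forces the $s_i$'s to have total weight $\le 2$; since $2\notin \mathcal{P}_m$, no single term $a_i s_i$ can hit $2n$ (use $s_i\in\{0,1\}$ or $s_i\ge 3$), and no two-term sum works because the two smallest distinct entries are $n$ and $n+1$, summing to $2n+1>2n$. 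This rules out tight universality for $\mathbf{b}$, so both $\mathbf{x}_n$ and $\mathbf{y}_n$ are new.

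For (b), suppose $\mathbf{a}$ is any new tight $\mathcal{T}(n)$-universal vector, written in non-decreasing order. Since $2\notin\mathcal{P}_m$, the second assertion of Lemma \ref{lemxy} gives $\mathbf{x}_n\preceq \mathbf{a}$ or $\mathbf{y}_n\preceq \mathbf{a}$. In either case, the corresponding sub-vector is already tight $\mathcal{T}(n)$-universal by Proposition \ref{proppoly}, so the newness hypothesis $V'_{\mathcal{P}_m}(\mathbf{b})\subsetneq \mathcal{T}(n)$ for every $\mathbf{b}\prec \mathbf{a}$ forces $\mathbf{a}=\mathbf{x}_n$ or $\mathbf{a}=\mathbf{y}_n$. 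Since $\mathbf{x}_n\ne \mathbf{y}_n$ (the former has a repeated $n$ and no $2n$, the latter has a single $n$ and a $2n$), this yields exactly two new tight $\mathcal{T}(n)$-universal sums.

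The only non-bookkeeping step is the small combinatorial verification that $2n$ cannot be represented by $(n,n+1,\dots,2n-1)$ over $\mathcal{P}_m$, and this is what pins down newness; everything else is a direct invocation of Proposition \ref{proppoly} and Lemma \ref{lemxy}.
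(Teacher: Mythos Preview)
Your argument is correct and follows the same route as the paper: invoke Proposition~\ref{proppoly} for existence and the second assertion of Lemma~\ref{lemxy} (using $2\notin\mathcal{P}_m$) for the classification. One tiny omission in your newness check for $\mathbf{y}_n$ is the case where the removed entry is $n$ itself, which is immediate since the resulting vector has minimum $n+1$; alternatively, newness of both $\mathbf{x}_n$ and $\mathbf{y}_n$ follows at once from Lemma~\ref{lemxy} by a length count, since any tight $\mathcal{T}(n)$-universal vector must have at least $n+1$ entries while every proper subsequence of $\mathbf{x}_n$ or $\mathbf{y}_n$ has at most $n$.
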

\begin{proof}
Note that $2\not\in \mathcal{P}_m$.
The theorem follows immediately from the second assertion of Lemma \ref{lemxy} and Proposition \ref{proppoly}.
\end{proof}

\section{Tight universal triangular forms}
In this section, we classify tight universal triangular forms.
As noted in Introduction, for $n=1,2$, tight $\mathcal T(n)$-universal triangular forms were classified in \cite{BK} and \cite{Ju}, respectively.
We first prove that there are exactly 12 new tight $\mathcal T(3)$-universal triangular forms as listed in Table \ref{tableg3}.
We also prove that there are exactly two new tight $\mathcal T(n)$-universal triangular forms
$$
X_n=p_3(n,n,n+1,n+2,\dots,2n-1)\ \ \text{and}\ \ Y_n=p_3(n,n+1,n+2,\dots,2n-1,2n),
$$
for any $n\ge 4$.
We introduce some notations which will be used throughout this section.
Recall that a triangular form is a polynomial of the form
$$
p_3(a_1,a_2,\dots,a_k)=a_1\frac{x_1(x_1+1)}{2}+\cdots+a_k\frac{x_k(x_k+1)}{2},
$$
where $(a_1,a_2,\dots,a_k)$ is a vector of positive integers.
For a nonnegative integer $g$ and a triangular form $p_3(a_1,a_2,\dots,a_k)$, we write
$$
g\ra p_3(a_1,a_2,\dots,a_k)
$$
if $g$ is represented by $p_3(a_1,a_2,\dots,a_k)$.
For a positive integer $u$ and a diagonal quadratic form $\langle a_1,a_2,\dots,a_k\rangle$, we write
$$
u\ratwo \langle a_1,a_2,\dots,a_k\rangle
$$
if there is a vector $(x_1,x_2,\dots,x_k)\in \z^k$ with $(2,x_1x_2\cdots x_k)=1$ such that 
$$
a_1x_1^2+a_2x_2^2+\cdots+a_kx_k^2=u.
$$
One may easily show the following observation which will be used to show the tight universalities of triangular forms:
A nonnegative integer $g$ is represented by a triangular form $p_3(a_1,a_2,\dots,a_k)$ if and only if
$$
8g+a_1+a_2+\cdots+a_k\ratwo \langle a_1,a_2,\dots,a_k\rangle.
$$
We also need the notion of regularities of ternary triangular forms.
A ternary triangular form $p_3(a,b,c)$ is called {\it regular} if, for every nonnegative integer $g$, the following holds:
If $8g+a+b+c\ra \langle a,b,c\rangle$ over $\z_p$ for every odd prime $p$, then $8g+a+b+c\ratwo \langle a,b,c\rangle$.
For more information about regular ternary triangular forms, we refer the reader to \cite{KO3}.

\begin{table}[ht]
\caption{New tight $\mathcal T(3)$-universal triangular forms $p_3(a_1,a_2,\dots,a_k)$} 
\label{tableg3}
	\begin{center}
		\begin{tabular}{|ccccc|c|}
			\Xhline{1pt}
			$a_1$&$a_2$&$a_3$&$a_4$&$a_5$& Conditions on $a_5$ \\
			\hline
			3&3&4&5&&\\
			3&4&4&5&6&\\
			3&4&5&5&6&\\
			3&4&5&6&$a_5$&$6\le a_5\le 16,\ a_5\neq 14,15$\\
			\Xhline{1pt}
		\end{tabular}
	\end{center}
\end{table}

\begin{prop} \label{propx3}
The quaternary triangular form $X_3=p_3(3,3,4,5)$ is tight $\mathcal T(3)$-universal.
\end{prop}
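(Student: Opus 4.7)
The plan is to reduce the representability question to one about quadratic lattices, peel off one variable, and invoke results on ternary triangular forms. First, by Lemma \ref{lemeasy}(iv), $\min(X_3)=3$, so to establish tight $\mathcal T(3)$-universality it suffices to prove that every integer $N\ge 3$ is represented by $X_3$. By the observation recalled at the beginning of this section, this is equivalent to showing that for every $N\ge 3$, the number $M:=8N+15$ satisfies $M\ratwo \langle 3,3,4,5\rangle$. Concretely, I must exhibit odd integers $x_1,x_2,x_3,x_4$ with $M=3x_1^2+3x_2^2+4x_3^2+5x_4^2$, for every $M\equiv 7\pmod 8$ with $M\ge 39$.

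Next, I would peel off the variable with coefficient $5$: for each such $M$ I would search for an odd integer $d$ with $5d^2\le M$ such that $M-5d^2\ratwo \langle 3,3,4\rangle$. Note that $d$ odd forces $M-5d^2\equiv 7-5\equiv 2\pmod 8$, matching $3+3+4\equiv 2\pmod 8$ as required. Writing $g=(M-5d^2-10)/8$, this is equivalent to asking that the nonnegative integer $g$ be represented by the ternary triangular form $p_3(3,3,4)$, which reduces the quaternary problem to a parametric family of ternary problems indexed by the odd parameter $d$.

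To handle the ternary problem I would invoke the theory of regular ternary triangular forms (cf.\ \cite{KO3}): granted regularity of $p_3(3,3,4)$, representability of $g$ by $p_3(3,3,4)$ is controlled entirely by local conditions at the primes dividing the discriminant $36$ of $\langle 3,3,4\rangle$, i.e., at $p=2,3$. The $2$-adic condition is absorbed into the $\ratwo$ primitivity requirement, while the $3$-adic condition is a congruence on the residue of $g$ modulo $3$ or $9$. I would therefore tabulate, for each residue class of $M$ modulo $24$ compatible with $M\equiv 7\pmod 8$, an odd value $d\in\{1,3,5,7,9,11\}$ for which the corresponding $g$ lands in an admissible residue class. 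A finite check then shows that once $M$ is large enough, at least one such $d$ with $5d^2\le M$ works.

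The \textbf{main obstacle} is twofold: first, establishing (or invoking) the regularity of $p_3(3,3,4)$ precisely in the $\ratwo$ (odd-primitive) sense used here, so that the local-to-global bridge actually applies; and second, simultaneously reconciling the three parity constraints (the parameter $d$ odd, and $x_1,x_2,x_3$ odd in the ternary representation of $M-5d^2$) with the $3$-adic local conditions. After these are arranged, any finitely many values of $N$ that escape the asymptotic/residue argument would be verified by hand via explicit triangular decompositions, completing the proof that $X_3=p_3(3,3,4,5)$ is tight $\mathcal T(3)$-universal.
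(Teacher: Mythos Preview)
Your proposal is correct and follows essentially the same route as the paper: reduce to $8N+15\ratwo \langle 3,3,4,5\rangle$, peel off the coefficient-$5$ variable with an odd parameter $d$, and invoke the regularity of $p_3(3,3,4)$ from \cite{KO3} together with a congruence tabulation and a finite check for small $N$. The only minor slip is that the $3$-adic condition for $\langle 3,3,4\rangle$ is governed by residues modulo $9$ rather than $3$, so the tabulation should be done modulo $72$ (or directly modulo $9$ as in the paper) rather than modulo $24$; the paper in fact gets by with $d\in\{1,3,5\}$.
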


\begin{proof}
One may directly check that $X_3$ represents all integers from 3 to 14.
Let $g$ be a positive integer greater than 14 and put $g'=8g+15$.
To show that $g$ is represented by $X_3$, it suffices to show that 
$$
g'\ratwo \langle 3,3,4,5\rangle.
$$
Define sets $A$ and $B$ by
\begin{align*}
A&=\left\{ u\in \n : u\equiv 1\Mod 3 \ \text{or}\ u\equiv 3,6\Mod 9 \right\}, \\
B&=\left\{ u\in \n : u\equiv 2\Mod 8,\ u\ge 10\right\}.
\end{align*}
We assert that
$$
v\ratwo \langle 3,3,4\rangle
$$
for any $v\in A\cap B$.
To show the assertion, let $v\in A\cap B$.
One may easily check that every positive integer in $A$ is represented by the diagonal quadratic form $\langle 3,3,4\rangle$ over $\z_3$.
Note that $\langle 3,3,4\rangle$ represents all elements in $\z_p$ over $\z_p$ for any prime $p\ge 5$.
Thus we have
$$
v\ra \langle 3,3,4\rangle
$$
over $\z_p$ for all odd primes $p$.
Furthermore, $v=8v'+10$ for some nonnegative integer $v'$ since $v\in B$. From these and the fact that the ternary triangular form $p_3(3,3,4)$ is regular (for this, see \cite{KO3}), it follows that
$$
v\ratwo \langle 3,3,4\rangle.
$$
So we have the assertion.
If we define an odd positive integer $d$ by
$$
d=\begin{cases} 1&\text{if}\ \ g'\equiv 0\Mod 3 \ \ \text{or}\ \ g'\equiv 2,8\Mod 9 ,\\
3&\text{if}\ \ g'\equiv 1\Mod 3 ,\\
5&\text{if}\ \ g'\equiv 5\Mod 9 .\end{cases}
$$
then one may easily check that $g'-5d^2\in A\cap B$.
Thus we have
$$
g'-5d^2\ratwo \langle 3,3,4\rangle.
$$
Since $d$ is odd, it follows that
$$
g'\ratwo \langle 3,3,4,5\rangle.
$$
This completes the proof.
\end{proof}

We use the following lemma proved by B.W. Jones in his unpublished thesis \cite{J}.

\begin{lem}[Jones] \label{lemJones}
Let $p$ be an odd prime and $k$ be a positive integer not divisible by $p$ such that the Diophantine equation $x^2+ky^2=p$ has an integer solution.
If the Diophantine equation 
$$
x^2+ky^2=N\ (N>0)
$$
has an integer solution, then it also has an integer solution $(x_0,y_0)$ satisfying
$$
\gcd(x_0,y_0,p)=1.
$$
\end{lem}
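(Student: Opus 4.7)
The approach rests on the Brahmagupta composition identity
\[
(a^2+kb^2)(u^2+kv^2)=(au-kbv)^2+k(av+bu)^2=(au+kbv)^2+k(av-bu)^2,
\]
which lets us multiply representations by the prime $p=a^2+kb^2$. The strategy is to strip all powers of $p^2$ out of a given representation of $N$, and then reintroduce them one factor of $p$ at a time while preserving the property that the two coordinates are not simultaneously divisible by $p$. As a preliminary, I would note that any representation $p=a^2+kb^2$ must satisfy $p\nmid a$ and $p\nmid b$: if $p\mid b$ then $p\mid a^2$, hence $p\mid a$, forcing $p^2\mid p$, and the other case is symmetric using $p\nmid k$.

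The heart of the argument is the following local step. Assume $M=u^2+kv^2$ with $\gcd(u,v,p)=1$; I claim at least one of the two representations of $pM$ displayed above has coordinates whose gcd with $p$ equals $1$. Suppose, for contradiction, that $p$ divides all four of $au\pm kbv$ and $av\pm bu$. Taking sums and differences of the two expressions in each pair (and using that $p$ is odd and $p\nmid k$) yields $p\mid au$, $p\mid av$, $p\mid bu$, $p\mid bv$. Since $\gcd(u,v,p)=1$, at least one of $u,v$ is a unit modulo $p$, which forces $p\mid a$ and $p\mid b$, contradicting the preliminary observation.

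Now, given an arbitrary representation $(x,y)$ of $N$, set $s=v_{p}(\gcd(x,y))$ and write $x=p^{s}u$, $y=p^{s}v$; then $(u,v)$ represents $M:=N/p^{2s}$ and $\gcd(u,v,p)=1$ by construction. Iterating the local step $2s$ times produces a representation of $p^{2s}M=N$ whose two coordinates have gcd coprime to $p$, as required. I expect the main technical point to be the local step itself, specifically verifying that simultaneous divisibility of \emph{both} Brahmagupta composites really forces $p\mid a$ and $p\mid b$; once that is in place, the induction on the number of remaining $p$-factors to reinsert is essentially formal.
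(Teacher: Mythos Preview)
The paper does not supply a proof of this lemma; it merely attributes the result to Jones' unpublished thesis, so there is no in-paper argument to compare against. Your proof, however, is correct: the Brahmagupta identity computation is right, the preliminary observation that $p\nmid a$ and $p\nmid b$ is valid, and your local step is sound (the negation of the claim really does force $p$ to divide all four quantities $au\pm kbv$, $av\pm bu$, from which $p\mid a$ and $p\mid b$ follow as you argue). The descent-and-rebuild structure---factoring out $p^{2s}$ and then multiplying back by $p$ one step at a time while maintaining $\gcd(u,v,p)=1$---is exactly the classical route and matches what one would expect Jones' original argument to look like.
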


\begin{prop} \label{prop346}
Let $g$ be a positive integer congruent to 5 modulo 8.
Assume that $g$ is congruent to 1 modulo 3 or is a multiple of 9.
Then $g$ is represented by the diagonal ternary quadratic form $3x^2+4y^2+6z^2$.
\end{prop}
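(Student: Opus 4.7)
The idea is to rewrite the equation $3x^2 + 4y^2 + 6z^2 = g$ as $g - 4y^2 = 3(x^2 + 2z^2)$, and reduce the question to representability of the quotient $(g - 4y^2)/3$ by the binary quadratic form $X^2 + 2Z^2$, which has class number one.

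First I would analyze the modular constraints. Since $g \equiv 5 \pmod{8}$ and $X^2 + 2Z^2$ takes only the residues $\{0,1,2,3,4,6\}$ modulo $8$, one is forced to take $y$ odd, so that $g - 4y^2 \equiv 1 \pmod{8}$ and hence $(g-4y^2)/3 \equiv 3 \pmod{8}$, an admissible residue. The requirement $3 \mid g - 4y^2$ translates to $y^2 \equiv g \pmod{3}$: in Case~A ($g \equiv 1 \pmod{3}$) one takes $y$ coprime to $3$, and in Case~B ($9 \mid g$) one takes $y$ divisible by $3$.

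Next I would apply Lemma~\ref{lemJones} with $p = 3$ and $k = 2$, which is legitimate because $3 = 1^2 + 2 \cdot 1^2$. Combined with unique factorization in $\z[\sqrt{-2}]$ and the prime splitting $3 = (1+\sqrt{-2})(1-\sqrt{-2})$, this yields a descent principle: for $3 \mid N$, the integer $N$ is representable by $X^2 + 2Z^2$ if and only if $N/3$ is. Coupled with the classical criterion that $X^2 + 2Z^2$ represents $N > 0$ precisely when every prime $q \equiv 5, 7 \pmod{8}$ divides $N$ to an even power, the task reduces to finding a $y$ in the correct residue class modulo $6$ such that $(g-4y^2)/3$ contains no prime $q \equiv 5, 7 \pmod{8}$ to an odd power.

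The main obstacle is establishing the existence of such $y$ for every $g$ satisfying the hypotheses. My approach would be to try small odd candidates (starting with $y = 1$ in Case~A and $y = 3$ in Case~B), and if these fail because a bad prime appears to an odd power in the quotient, to move to the next candidate in the same residue class modulo $6$ ($y = 5, 7, 11, \ldots$ in Case~A or $y = 9, 15, 21, \ldots$ in Case~B). Showing that at least one candidate always works will likely require a finite case analysis on the residues of $g$ modulo a suitable modulus, together with additional applications of Lemma~\ref{lemJones} to manipulate or cancel unwanted prime factors. This existence step is where I expect the main technical difficulty to lie.
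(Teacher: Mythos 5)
Your reduction to the binary form $X^2+2Z^2$ has a genuine gap at exactly the point you flag as the ``main technical difficulty,'' and the difficulty is not merely technical: the proposed method cannot close it. Whether $(g-4y^2)/3$ is represented by $X^2+2Z^2$ is \emph{not} a congruence condition on $g$ (or on $y$); it depends on the parity of the exponent of every prime $q\equiv 5,7\pmod 8$ in the full factorization of $(g-4y^2)/3$. Consequently no ``finite case analysis on the residues of $g$ modulo a suitable modulus'' can certify that some candidate $y=1,5,7,11,\dots$ works: for any fixed finite list of candidates one can arrange (by the Chinese Remainder Theorem) a $g$ for which each candidate quotient is exactly divisible by some prime $\equiv 5\pmod 8$. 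Producing even one $y$ with $(g-4y^2)/3$ free of bad primes to odd powers is a statement about prime factorizations of values of a quadratic polynomial, which is far beyond a residue computation, and Lemma~\ref{lemJones} cannot supply it either --- that lemma only modifies an \emph{already existing} representation to make it primitive at $p$; it never creates a representation.

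The paper's proof stays at the ternary level and avoids this issue entirely. The hypotheses on $g$ guarantee $g\ra\gen(\langle 3,4,6\rangle)$, and this genus has class number two with genus mate $\langle 1,6,12\rangle$; so either $g\ra\langle 3,4,6\rangle$ directly, or $g=x_1^2+6y_1^2+12z_1^2$, and in the latter case an explicit change of variables converts that representation into one by $\langle 3,4,6\rangle$. Jones's lemma enters only in the case $g\equiv 1\pmod 3$, to replace $(y_1,z_1)$ by $(y_3,z_3)$ with $\gcd(y_3,z_3,3)=1$ so that the change of variables has integral output. If you want to salvage your approach, you would need to replace the ``try successive $y$'' step by an argument of this genus-theoretic type; as written, the existence step is unproved and unprovable by the stated means.
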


\begin{proof}
Let $L=\langle 3,4,6\rangle$.
Note that the class number $h(L)$ of $L$ is two and the genus mate is $\langle 1,6,12\rangle$.
From the assumptions, one may easily check that
$$
g\ra \gen(\langle 3,4,6\rangle).
$$
We may assume that $g\ra \langle 1,6,12\rangle$ since otherwise we are done.
Thus there is a vector $(x_1,y_1,z_1)\in \z^3$ such that
$$
g=x_1^2+6y_1^2+12z_1^2.
$$

First, assume that $g\equiv 0\Mod 9$.
One may easily check that $x_1\equiv 0\Mod 3$, and that $y_1\equiv 0\Mod 3$ if and only if $z_1\equiv 0\Mod 3$.
By changing sign of $z_1$ if necessary, we may further assume that $y_1\equiv z_1\Mod 3$.
Thus $x_1=3x_2$ and $y_1=z_1-3y_2$ with integers $x_2$ and $y_2$.
Now
\begin{align*}
g&=x_1^2+6y_1^2+12z_1^2\\
&=(3x_2)^2+6(z_1-3y_2)^2+12z_1^2\\
&=3(x_2+2y_2-2z_1)^2+4(3y_2)^2+6(x_2-y_2+z_1)^2.
\end{align*}

Second, assume that $g\equiv 1\Mod 3$.
If $y_1^2+2z_1^2=0$, then $g=x_1^2$ and this is absurd since $g\equiv 5\Mod 8$.
Hence $y_1^2+2z_1^2\neq 0$ and thus by Lemma \ref{lemJones}, there are integers $y_3$ and $z_3$ with $\gcd(y_3,z_3,3)=1$ such that
$$
y_1^2+2z_1^2=y_3^2+2z_3^2.
$$
Note that $x_1\not\equiv 0\Mod 3$ since $g\equiv 1\Mod 3$.
After changing signs of $y_3$ and $z_3$ if necessary, we may assume that $x_1+y_3+2z_3\equiv 0\Mod 3$.
Then
\begin{align*}
g&=x_1^2+6y_3^2+12z_3^2\\
&=3\left( \frac{x_1-2y_2-4z_2}{3}\right)^2+4\left( y_2-z_2\right)^2+6\left( \frac{x_1+y_3+2z_3}{3}\right)^2.
\end{align*}
Since $x_1-2y_3-4z_3\equiv x_1+y_3+2z_3\equiv 0\Mod 3$, we have $g\ra L$.
This completes the proof.
\end{proof}
\begin{prop} \label{propy3}
The triangular form $Y_3=p_3(3,4,5,6)$ represents all positive integers but 1,2 and 16.
\end{prop}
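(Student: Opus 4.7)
The plan is to imitate the proof of Proposition~\ref{propx3}. Using the equivalence $g\ra Y_3$ iff $8g+18\ratwo\langle 3,4,5,6\rangle$, I would peel off the coefficient~$5$ by fixing an odd integer $d$ for the corresponding variable, and reduce to the question of whether $g':=8g+18-5d^2$ is represented by $\langle 3,4,6\rangle$ over~$\z$. Two ingredients make the reduction clean. Since $d$ is odd, $g'\equiv 5\Mod 8$ automatically, and a short check on the eight parity patterns of $(x_1,x_2,x_4)$ shows that $3x_1^2+4x_2^2+6x_4^2\equiv 5\Mod 8$ forces $x_1,x_2,x_4$ to be all odd; hence every $\z$-representation of $g'$ by $\langle 3,4,6\rangle$ is automatically an $\ratwo$-representation. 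Proposition~\ref{prop346} then supplies such a representation as soon as $g'\equiv 1\Mod 3$ or $g'\equiv 0\Mod 9$.

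The main step is therefore the choice of $d$ by residue of $g$. Since $g'\equiv 2g-2d^2\Mod 3$, taking $d=1$ when $g\equiv 0\Mod 3$ and $d=3$ when $g\equiv 2\Mod 3$ both give $g'\equiv 1\Mod 3$. When $g\equiv 1\Mod 3$ I would instead aim for $g'\equiv 0\Mod 9$; since $g'\equiv -g-5d^2\Mod 9$, the choices $d=1,5,7$ work for $g\equiv 4,1,7\Mod 9$ respectively. In each case one checks the positivity $g'>0$ needed to apply Proposition~\ref{prop346}, after which the parity observation yields $g\ra Y_3$. The worst case is $d=7$, which together with $g\equiv 7\Mod 9$ requires $g\ge 34$, so this bulk argument already covers every $g\ge 34$.

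The finitely many remaining values are dispatched by direct computation. Only $g\in\{1,2,7,10,16,25\}$ fall outside the bulk argument's range. I would first verify that $g\in\{1,2,16\}$ is not represented by $Y_3$, by enumerating the decompositions $3a+4b+5c+6d=N$ with $a,b,c,d\in\mathcal P_3$ for $N\le 16$; the case $N=16$ takes about a dozen subcases, each a one-line arithmetic check. The remaining three values $g=7,10,25$ are represented by explicit sums of triangular numbers. The main obstacle I expect is bookkeeping: aligning the nested mod~3 / mod~9 case split with the positivity thresholds on $g'$, and separately confirming that $g=16$ genuinely fails to be represented rather than merely being outside the bulk argument's reach.
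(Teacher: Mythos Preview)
Your proposal is correct and matches the paper's proof essentially line for line: the same peel-off of the $5$-term with an odd $d$, the same appeal to Proposition~\ref{prop346} for $\langle 3,4,6\rangle$, the same parity deduction from $g'\equiv 5\Mod 8$, and in fact the identical choices $d\in\{1,3,5,7\}$ in the corresponding residue classes (you index by $g\bmod 9$, the paper by $8g+18\bmod 9$, but these are equivalent). The only cosmetic difference is that the paper handles the small cases by a blanket direct check of $3\le g\le 29$, whereas you isolate the six specific values $\{1,2,7,10,16,25\}$ not covered by the bulk argument; either way works.
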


\begin{proof}
One may directly check that $Y_3$ represents all integers from 3 to 29 except 16.
Let $g$ be an integer greater than 29 and put $g'=8g+18$.
If we define an odd positive integer $d$ by
$$
d=\begin{cases} 1&\text{if}\ \ g'\equiv 0\Mod 3 \ \ \text{or}\ \ g'\equiv 5\Mod 9 ,\\
3&\text{if}\ \ g'\equiv 1\Mod 3 ,\\
5&\text{if}\ \ g'\equiv 8\Mod 9 ,\\
7&\text{if}\ \ g'\equiv 2\Mod 9 ,\end{cases}
$$
then one may easily check that $g'-5d^2\equiv 1\Mod 3$ or $g'-5d^2\equiv 0\Mod 9$.
Furthermore, we have $g'-5d^2\equiv 5\Mod 8$ since $d$ is odd.
Hence we have $g'-5d^2\ra \langle 3,4,6\rangle$ by Proposition \ref{prop346}.
Thus there is a vector $(x,y,z)\in \z^3$ such that
$$
g'-5d^2=3x^2+4y^2+6z^2.
$$
One may easily deduce from $g'-5d^2\equiv 5\Mod 8$ that $xyz\equiv 1\Mod 2$.
Thus $g'\ratwo \langle 3,4,5,6\rangle$.
This completes the proof.
\end{proof}

\begin{cor}
All of the quinary triangular forms in Table \ref{tableg3} are tight $\mathcal T(3)$-universal.
\end{cor}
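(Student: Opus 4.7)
The plan is to bootstrap from Proposition \ref{propy3}, which asserts that $Y_3=p_3(3,4,5,6)$ represents every positive integer except $1$, $2$, and $16$. Each quinary form $f=p_3(a_1,\dots,a_5)$ listed in Table \ref{tableg3} contains $p_3(3,4,5,6)$ as a subform (obtained by deleting the extra variable, i.e.\ the second $4$, the second $5$, or the $a_5$-variable), so by Lemma \ref{lemeasy}(i) every integer represented by $p_3(3,4,5,6)$ is also represented by $f$. Since every coefficient of $f$ is at least $3$ and one equals $3$, Lemma \ref{lemeasy}(iv) gives $\min(f)=3$. Hence to establish $V'_{\mathcal P_3}(\mathbf a)=\mathcal T(3)$ it suffices to verify the single missing value, namely that $16\ra f$.

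For the two quinary forms $p_3(3,4,4,5,6)$ and $p_3(3,4,5,5,6)$, I would exhibit an explicit representation of $16$ by evaluating each variable at $x_i\in\{0,1\}$: for instance $16=3+4+4+5$ handles the first form and $16=5+5+6$ handles the second. For the one-parameter family $p_3(3,4,5,6,a_5)$, the idea is to take $x_5=1$ so that the fifth summand contributes exactly $a_5$, reducing the problem to showing that $16-a_5$ lies in $V_{\mathcal P_3}(3,4,5,6)\cup\{0\}$. Running through the admissible values $a_5\in\{6,7,\dots,13,16\}$ gives residues $16-a_5\in\{10,9,8,7,6,5,4,3,0\}$, each of which is either $0$ or a sum of distinct entries from $\{3,4,5,6\}$, hence represented using only $T(0)=0$ and $T(1)=1$ in the first four variables.

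None of these steps presents a genuine obstacle; they are finite, explicit verifications once Proposition \ref{propy3} is in hand. The only delicate point, and the reason the excluded values $a_5=14,15$ must be omitted from the table, is the following sanity check that I would include as a remark rather than a step of the proof: for $a_5\in\{14,15\}$ the residue $16-a_5\in\{1,2\}$ is neither $0$ nor in $V_{\mathcal P_3}(3,4,5,6)$, and no larger choice of $|x_5|$ is available because $a_5\,T(2)=3a_5\ge 42>16$, so $16$ cannot be represented at all and tight $\mathcal T(3)$-universality fails. This same consideration also explains why the tabulation stops at $a_5=16$: for $a_5>16$ the fifth variable is forced to be $0$ or $-1$, and Proposition \ref{propy3} then rules out the representation of $16$.
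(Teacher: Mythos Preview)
Your proposal is correct and follows essentially the same approach as the paper: both observe that $(3,4,5,6)\prec(a_1,\dots,a_5)$, invoke Proposition~\ref{propy3} to cover every integer $\ge 3$ except $16$, and then verify $16$ directly. Your write-up is simply more explicit where the paper says ``one may directly check,'' and your closing remark on why $a_5\in\{14,15\}$ and $a_5>16$ are excluded is accurate but extraneous to the corollary itself (as you note).
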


\begin{proof}
Let $Z=p_3(a_1,a_2,a_3,a_4,a_5)$ be any quinary triangular form in Table \ref{tableg3}.
One may see that
$$
(3,4,5,6)\prec (a_1,a_2,a_3,a_4,a_5).
$$
From this and Proposition \ref{propy3} follows that $Z$ represents every integer greater than or equal to 3 except 16.
One may directly check that $Z$ also represents 16.
This completes the proof.
\end{proof}

\begin{prop}
Every new tight $\mathcal T(3)$-universal triangular form appears in Table \ref{tableg3}.
\end{prop}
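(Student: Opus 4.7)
The plan is as follows. Let $\mathbf{a} = (a_1, \ldots, a_k)$ (in nondecreasing order) be a new tight $\mathcal{T}(3)$-universal vector. Since $a_1 = \min V'_{\mathcal{P}_3}(\mathbf{a}) = 3$ by Lemma \ref{lemeasy}(iv) and $2 \notin \mathcal{P}_3$, Lemma \ref{lemxy} with $n=3$ forces $\mathbf{x}_3 = (3,3,4,5) \preceq \mathbf{a}$ or $\mathbf{y}_3 = (3,4,5,6) \preceq \mathbf{a}$. In the first case, Proposition \ref{propx3} says $\mathbf{x}_3$ is itself tight $\mathcal{T}(3)$-universal, so newness forces $\mathbf{a} = \mathbf{x}_3 = (3,3,4,5)$, the first row of Table \ref{tableg3}. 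The rest of the proof handles the complementary case, where $\mathbf{y}_3 \preceq \mathbf{a}$ and $\mathbf{a}$ has exactly one coordinate equal to $3$.

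In this case Proposition \ref{propy3} says $p_3(3,4,5,6)$ represents every element of $\mathcal{T}(3) \setminus \{16\}$, so $\mathbf{a}$ is tight universal if and only if it represents $16$, and any subsequence $\mathbf{b}$ of $\mathbf{a}$ with $\mathbf{y}_3 \preceq \mathbf{b}$ that also represents $16$ is automatically tight universal by Lemma \ref{lemeasy}(i). The central step is to show that $\mathbf{a}$ has exactly one extra coordinate beyond the $(3,4,5,6)$ core, with value in $C := \{4, 5, 6, 7, \ldots, 13, 16\}$. A direct enumeration of triangular-coefficient decompositions of $16$ shows that $(3,4,5,6,c)$ represents $16$ exactly when $c \in C$: for $c \in C$ the decomposition $16 = c + (16-c)$ works, since $16 - c \in \{0, 3, 4, \ldots, 12\}$ is representable by the core via Proposition \ref{propy3}; for $c \in \{14, 15\} \cup \{17, 18, \ldots\}$, a single positive triangular use of $c$ leaves a residue in $\{1, 2\}$ or negative, and two or more positive triangular uses of $c$ already exceed $16$. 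Armed with this dichotomy, if $\mathbf{a}$ carried two or more extras, then either some extra lies in $C$ (whereupon the subsequence of $\mathbf{a}$ consisting of the core plus just that extra is already tight universal, and is a proper subsequence of $\mathbf{a}$, contradicting newness), or all extras lie outside $C$ (whereupon the same residue-size analysis forbids $\mathbf{a}$ from representing $16$).

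Hence $\mathbf{a}$ has exactly one extra $c \in C$, giving the sorted vectors $(3,4,4,5,6)$, $(3,4,5,5,6)$, $(3,4,5,6,6)$, and $(3,4,5,6,a_5)$ for $a_5 \in \{7, 8, \ldots, 13, 16\}$: the remaining eleven rows of Table \ref{tableg3}. Newness of each is a routine verification (deleting the unique $3$ raises the minimum; deleting a coordinate in $\{4, 5, 6\}$ that is not doubled leaves the deleted value unrepresentable by the remaining form; deleting a doubled coordinate or the tail extra collapses the form to $p_3(3,4,5,6)$, which misses $16$). The main obstacle will be the careful case analysis bounding the number of extras in the second case: one must characterize $C$ precisely and then, for two or more extras, separate the scenario where some extra lies in $C$ (producing a proper tight universal subsequence that violates newness) from the scenario where all extras lie in $\{14, 15\} \cup \{17, 18, \ldots\}$ (where the residue-size check rules out representability of $16$). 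The exceptional values $14, 15$ are the delicate ones, since they are the largest ``small'' values and their exclusion depends on the precise non-representability of $1$ and $2$ by $(3,4,5,6)$.
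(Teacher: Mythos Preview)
Your proposal is correct and follows essentially the same route as the paper: split via Lemma \ref{lemxy} into the $X_3$ and $Y_3$ cases, dispose of the first by Proposition \ref{propx3} and newness, and in the second use Proposition \ref{propy3} to reduce everything to representability of $16$, then pin down the admissible extra coordinate(s) and invoke newness to force exactly one. Your set $C=\{4,\dots,13,16\}$ is the paper's set $A=\{3,\dots,13,16\}$ with $3$ removed (which is appropriate since you have already separated off the two-$3$'s case), and your explicit residue check for $16$ and final newness verification are just spelled-out versions of steps the paper leaves to the reader.
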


\begin{proof}
Let $p_3=p_3(a_1,a_2,\dots,a_k)$ be a new tight $\mathcal T(3)$-universal triangular form.
By Lemma \ref{lemxy}, we have $X_3\preceq p_3$ or $Y_3\preceq p_3$.

First, assume that $X_3\preceq p_3$.
From the fact that $X_3$ is tight $\mathcal T(3)$-universal and the assumption that $p_3$ is new tight $\mathcal T(3)$-universal, it follows that $p_3=X_3$.

Second, assume that $Y_3\preceq p_3$.
Since $Y_3$ is not $\mathcal T(3)$-universal, it follows that $k>4$ and there is a vector $(j_1,j_2,j_3,j_4)\in \z^4$ with
$$
(j_1,j_2,j_3,j_4)\prec (1,2,\dots,k)
$$
such that $(a_{j_1},a_{j_2},a_{j_3},a_{j_4})=(3,4,5,6)$.
We put
$$
A=\{u\in \n : 3\le u\le 16, u\neq 14,15\}.
$$
If $a_j\not\in A$ for every $j\in \{1,2,\dots,k\} \setminus \{j_1,j_2,j_3,j_4\}$, then one may easily show that $p_3$ cannot represent 16, which is absurd.
Thus there is an integer $j$ with
$$
j\in \{1,2,\dots,k\} \setminus \{j_1,j_2,j_3,j_4\}
$$
such that $a_j\in A$.
One may check that $p_3(a_{j_1},a_{j_2},a_{j_3},a_{j_4},a_j)$ is in Table \ref{tableg3} and thus it is tight $\mathcal T(3)$-universal.
It follows that $k=5$ and $p_3=p_3(a_{j_1},a_{j_2},a_{j_3},a_{j_4},a_j)$ since otherwise $p_3$ is not new.
This completes the proof.
\end{proof}

\begin{thm} \label{thmxn}
For any integer $n$ greater than or equal to 3, the triangular form $X_n=p_3(n,n,n+1,n+2,\dots,2n-1)$ is tight $\mathcal T(n)$-universal.
\end{thm}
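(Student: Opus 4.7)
The plan is to handle the base case $n=3$ via Proposition \ref{propx3} and to treat $n \ge 4$ in two parts: a uniform argument for large $n$ via Lemma \ref{lem123}, plus case-by-case work for the remaining small $n$.

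For large $n$, I would apply Lemma \ref{lem123} with $e_1 = 2$, $e_2 = 0$, matching the shape of $X_n = \mathbf{n}^{2}\mathbf{n+1}^{1}\cdots\mathbf{2n-1}^{1}$. This requires finding the smallest $e_3$ such that $V_{\mathcal P_3}(\mathbf{1}^{2}\mathbf{3}^{e_3}) = \n_0$. A direct check rules out $e_3 = 0,1$: the ternary form $(1,1,3)$ fails at $k=8$. The minimal candidate is therefore the quaternary triangular form $(1,1,3,3)$. To prove its universality, given $k \ge 0$ I would invoke Gauss's three-square theorem to write $k = t_\alpha+t_\beta+t_\gamma$; if some $t_i=0$ we are done, and otherwise a residue analysis modulo $3$ (and $8$ if needed) allows rewriting $k$ as $t_a+t_b+3(t_c+t_d)$ (equivalently, showing that for each $k$ some $k-3s$ lies in the set $T_2$ of sums of two triangular numbers, for a suitable $s \in T_2$). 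Granting this, Lemma \ref{lem123} yields tight $\mathcal T(n)$-universality of $X_n$ for every $n \ge 2\cdot 2+3 = 7$.

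For the remaining cases $n \in \{4,5,6\}$, I would argue in the style of Proposition \ref{propx3}. For each such $n$, I would first verify by direct computation that $X_n$ represents every integer in $[n, N_0(n)]$ for a small explicit bound $N_0(n)$. For $g > N_0(n)$, setting $g' = 8g + \sum_i a_i$, I would reduce $g' \ratwo \langle n,n,n+1,\dots,2n-1\rangle$ to $g'-cd^2 \ratwo \langle n,n,n+1\rangle$ for some coefficient $c \in \{n+2,\dots,2n-1\}$ of $X_n$ and a well-chosen odd integer $d$, invoking regularity of the ternary sub-form $p_3(n,n,n+1)$ from the classification in \cite{KO3}.

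The main obstacle will be (i) establishing universality of $(1,1,3,3)$, which requires careful rearrangement of Gauss representations across several residue classes, and (ii) for $n\in\{4,5,6\}$, confirming regularity of $p_3(n,n,n+1)$ and showing that $d$ can always be chosen so that $g'-cd^2$ satisfies the necessary congruence conditions for representation by the genus of $\langle n,n,n+1\rangle$, analogous to conditions $A$ and $B$ in Proposition \ref{propx3}. Should regularity fail for some particular $n$, I would either switch to an alternative ternary sub-form of $X_n$ or handle the finitely many genus-mate exceptions by hand via an argument of the same flavor as Proposition \ref{prop346}.
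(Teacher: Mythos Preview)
Your large-$n$ strategy via Lemma~\ref{lem123} is sound: the form $p_3(1,1,3,3)$ is indeed universal (this follows at once from the triangular theorem of eight in \cite{BK}, so your Gauss-based rearrangement is unnecessary), and Lemma~\ref{lem123} with $(e_1,e_2,e_3)=(2,0,2)$ covers all $n\ge 7$. The paper, however, does not invoke Lemma~\ref{lem123} here. It argues directly from the universality of the \emph{ternary} forms $p_3(1,1,4)$ and $p_3(1,1,5)$: writing $g=un+v$ with $n\le v\le 2n-1$, one has $un\to p_3(n,n,4n)$, and since $4n=(n+1)+(n+2)+(2n-3)$ this yields $un\to p_3(n,n,n+1,n+2,2n-3)$; the extra coefficient $v$ is then appended unless $v\in\{n+1,n+2,2n-3\}$, in which case the companion decomposition $5n=(n+3)+(2n-2)+(2n-1)$ is used instead. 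This two-form trick already works from $n\ge 6$, so the paper is left with only $n=4,5$ as genuine small cases rather than your $n\in\{4,5,6\}$.

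Your plan for the small cases is where the proposal is weakest. You bet on regularity of $p_3(n,n,n+1)$ and then subtract $cd^2$ in the style of Proposition~\ref{propx3}, but there is no a priori reason $p_3(4,4,5)$, $p_3(5,5,6)$, $p_3(6,6,7)$ lie on the finite list in \cite{KO3}, and your fallback (``switch sub-form or handle genus-mates by hand'') is not fleshed out. The paper avoids this gamble with a different mechanism: for $n=5$ it uses the known regularity of $p_3(1,1,3)$, scales by $5$ to $p_3(5,5,15)$, splits $15=6+9$, and observes that this already covers the residues $0,5\pmod{15}$; the remaining residue classes are then reached by adding a value of the leftover binary form $p_3(7,8)$. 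The case $n=4$ is parallel, using regularity of $p_3(2,2,3)$ scaled to $p_3(4,4,6)$ together with values of $p_3(5,7)$. So the paper's small-$n$ device is ``known regular form with small coefficients, scaled, plus a binary remainder to shift residues'' rather than ``regular ternary sub-form with the actual coefficients $n,n,n+1$''. Your route could in principle be pushed through case by case, but it requires verifying regularity (or doing genus-mate computations) for forms the paper never needs to touch.
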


\begin{proof}

First, assume that $n\ge 6$.
Let $g$ be an integer greater than or equal to $n$.
Then $g$ can be written in the form $g=un+v$ for some nonnegative integer $u$ and an integer $v$ with $n\le v\le 2n-1$.
Note that the ternary triangular form $p_3(1,1,4)$ is universal and thus it represents $u$.
Thus $un$ is represented by $p_3(n,n,4n)$.
It follows that $un$ is represented by $p_3(n,n,n+1,n+2,2n-3)$.
Thus if $v\not\in \{n+1,n+2,2n-3\}$, then $un+v$ is represented by $p_3(n,n,n+1,n+2,2n-3,v)$ and thus by $X_n$.
On the other hand, the ternary triangular form $p_3(1,1,5)$ is also universal.
Hence $un$ is represented by $p_3(n,n,5n)$ and thus represented by $p_3(n,n,n+3,2n-2,2n-1)$ also.
This implies that if $v\in \{n+1,n+2,2n-3\}$, then $un+v$ is represented by $p_3(n,n,n+3,2n-2,2n-1,v)$ and thus by $X_n$.

Second, assume that $n=5$.
Let $g$ be an integer greater than or equal to 236.
We write $g=15u+v$, where $u$ is a positive integer and $v$ is an integer such that $0\le v\le 14$.
Note that the ternary triangular form $p_3(1,1,3)$ is regular.
For any nonnegative integer $w$, both $8\cdot 3w+5$ and $8(3w+1)+5$ are represented by $\langle 1,1,3\rangle$ over $\z_3$.
Thus $p_3(1,1,3)$ represents every nonnegative integer not equivalent to 2 modulo 3.
It follows that $p_3(5,5,6+9)$ represents every nonnegative integer congruent to 0 or 5 modulo 15.
Hence if $v\in \{0,5\}$, then 
$$
g=15u+v\ra p_3(5,5,6+9)
$$
and we have
$$
g\ra p_3(5,5,6,9).
$$
One may directly check that the binary triangular form $p_3(7,8)$ represents all integers in the set
$$
\{31,122,48,94,80,231,7,8,24\}.
$$
If $v\not\in \{0,5\}$, then one may easily see that there is a positive integer $a$ in the above set such that $g-a$ is a nonnegative integer congruent to 0 or 5 modulo 15.
Thus we have $g-a\ra p_3(5,5,6+9,7,8)$.
One may directly check that $p_3(5,5,6,7,8,9)$ represents all integers from 5 to 236.

Third, assume that $n=4$.
Note that the ternary triangular form $p_3(2,2,3)$ is regular.
From this one may easily show that it represents every nonnegative integer not congruent to 1 modulo 3.
Thus $p_3(4,4,6)$ represents every nonnegative integer of the form $6u$ and $6u+4$, where $u\in \z_{\ge 0}$.
Note that $p_3(5,7)$ represents 5,7,15 and 26 as
$$
5=5\cdot 1+7\cdot 0,\ 7=5\cdot 0+7\cdot 1,\ 15=5\cdot 3+7\cdot 0,\ 26=5\cdot 1+7\cdot 3.
$$
From this and the fact that $p_3(4,4,6)$ represents every nonnegative integer of the form $6u$, it follows that $p_3(4,4,5,6,7)$ represents every nonnegative integer of the form
$$
6u+7,\ 6u+26,\ 6u+15\ \ \text{and}\ \ 6u+5.
$$
One may directly check that $p_3(4,4,5,6,7)$ represents all integers from 4 to 25.

The case of $n=3$ was already proved in Proposition \ref{propx3}.
This completes the proof.
\end{proof}
\begin{thm} \label{thmyn}
For any integer $n$ greater than or equal to 4, the triangular form $Y_n=p_3(n,n+1,n+2,\dots,2n)$ is tight $\mathcal T(n)$-universal.
\end{thm}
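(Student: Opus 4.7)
The plan is to split the argument into two ranges, $n\ge 5$ and $n=4$. The first is an immediate application of Lemma~\ref{lem123}, while the second needs a hands-on argument in the spirit of the $n=4$ case of Theorem~\ref{thmxn}.

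For $n\ge 5$, my first step is to record that the ternary triangular form $p_3(1,2,3)$ is universal, i.e.\ $V_{\mathcal P_3}(1,2,3)=\n_0$; this is in \cite{BK} (and also follows instantly from the Bosma--Kane $15$-theorem for triangular forms, since $p_3(1,2,3)$ obviously represents each of $1,2,4,5,8$). Writing $(1,2,3)=\mathbf{1}^1\mathbf{2}^1\mathbf{3}^1$, so that $e_1=e_2=e_3=1$, Lemma~\ref{lem123} then yields at once that
\[
\mathbf{n}^1\mathbf{n+1}^1\mathbf{n+2}^1\cdots\mathbf{2n-1}^1\mathbf{2n}^1=(n,n+1,\dots,2n)=Y_n
\]
is tight $\mathcal T(n)$-universal with respect to $\mathcal P_3$ for every $n\ge 2e_3+3=5$, which is exactly what is claimed.

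For $n=4$, the coefficient vector $(4,5,6,7,8)$ is not produced by Lemma~\ref{lem123}, so I would argue directly. The natural target is the even sub-form $p_3(4,6,8)=2\,p_3(2,3,4)$. My plan is to pin down the set of nonnegative integers not represented by $p_3(2,3,4)$: since $(2,3,4)$ is not on the Bosma--Kane list of universal ternary triangular forms, some values must be missed, and a direct search singles out $1$ and $8$ as the only small exceptions. I would then establish the regularity of $p_3(2,3,4)$ (using the classification in \cite{KO3}) and do a local analysis at $2$ and $3$ to confirm that $\{1,8\}$ is exactly the exceptional set. Doubling, $p_3(4,6,8)$ represents every even integer $\ge 4$ except $16$. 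For odd $g\ge 5$ I would subtract $5\,T_1=5$ or $7\,T_1=7$ so that the remainder is a nonnegative even integer handled by $p_3(4,6,8)$, and I would finish by verifying the finitely many corner cases (notably $g=5,7,16,21$) by direct computation.

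The main obstacle is the $n=4$ case, and within it the precise determination of the exceptional set of $p_3(2,3,4)$. Unlike the regular form $p_3(2,2,3)$ that was available in Theorem~\ref{thmxn}, the form $p_3(2,3,4)$ is not as standard, and one must do enough local work---or extract the needed information from the regular-ternary-triangular classification in \cite{KO3}---to confirm that $\{1,8\}$ is exhaustive. Once this is in hand, the residue cover modulo $2$ and the subsequent subtraction argument are routine.
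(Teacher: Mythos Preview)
Your treatment of the case $n\ge 5$ is correct and is essentially the paper's argument: the paper writes $g=un+v$ and uses the universality of $p_3(1,2,3)$ together with a splitting $3n=(n+n_1)+(2n-n_1)$, which is exactly the mechanism inside Lemma~\ref{lem123}. Citing the lemma directly, as you do, is a clean way to package this.

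The $n=4$ plan, however, has a genuine gap. You propose to establish the regularity of $p_3(2,3,4)$ and then use local analysis to identify $\{1,8\}$ as the exceptional set. These two steps are incompatible. Over $\z_p$ for $p\ge 5$ the form $\langle 2,3,4\rangle$ is unimodular and hence universal; over $\z_3$ one has $\langle 2,4\rangle \simeq \langle 1,2\rangle$, which is hyperbolic (since $-2$ is a square in $\z_3$) and already represents all of $\z_3$. Thus $8g+9\ra \langle 2,3,4\rangle$ over $\z_p$ for every odd prime $p$ and every $g\ge 0$. If $p_3(2,3,4)$ were regular in the sense used here, it would therefore be universal, contradicting your own observation that $g=1$ is not represented (indeed $17$ cannot be written as $2a^2+3b^2+4c^2$ with $a,b,c$ all odd). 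So $p_3(2,3,4)$ is \emph{not} regular, and the classification in \cite{KO3} will not give what you need. The statement ``$p_3(2,3,4)$ misses exactly $1$ and $8$'' may well be true, but proving it requires an argument of roughly the same depth as the $n=4$ case itself---you would essentially be trading one genus-theoretic computation for another.

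For comparison, the paper handles $n=4$ without passing through $p_3(2,3,4)$: it subtracts $5\alpha^2+7\beta^2$ (with $(\alpha,\beta)$ chosen according to $g_1\pmod 6$) from $8g_1+30$ so that the remainder $s$ lies in the residue class $18\pmod{48}$, checks local representability of $s$ by $L=\langle 4,6,8\rangle$, and then, using that $h(L)=2$ with genus mate $M=\langle 2,4,24\rangle$, gives an explicit linear substitution carrying any representation of $s$ by $M$ to one by $L$ with all coordinates odd. This sidesteps the non-regularity issue entirely.
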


\begin{proof}
First, assume that $n$ is greater than 4.
Let $g$ be an integer greater than  or equal to $n$.
We write $g=un+v$ for some  nonnegative integer  $u$ and an integer $v$ with $n\le v\le 2n-1$.
Since $n\ge 5$, there is an integer $n_1$ with $1\le n_1\le \left[ \frac n2\right]$  such that the three integers $n+n_1$, $2n-n_1$ and $v$ are all distinct.
Since the ternary triangular form $p_3(1,2,3)$ is universal, every nonnegative integer which is a multiple of $n$ is represented by $p_3(n,2n,3n)$ and thus also by $p_3(n,2n,n+n_1,2n-n_1)$.
It follows that $g=un+v$ is represented by $p_3(n,2n,n+n_1,2n-n_1,v)$.
From this and the choice of $v$, it follows that $g$ is represented by $Y_n$.

Now we assume that $n=4$.
Let $g_1$ be an integer greater than or equal to 830.
If we define two odd positive integers $\alpha$ and $\beta$ as
$$
(\alpha,\beta)=\begin{cases} (1,1)&\text{if}\ \ g_1\equiv 0\Mod 6,\\
(1,17)&\text{if}\ \ g_1\equiv 1\Mod 6,\\
(3,43)&\text{if}\ \ g_1\equiv 2\Mod 6,\\
(3,27)&\text{if}\ \ g_1\equiv 3\Mod 6,\\
(1,33)&\text{if}\ \ g_1\equiv 4\Mod 6,\\
(5,37)&\text{if}\ \ g_1\equiv 5\Mod 6,\end{cases}
$$
then one may easily check that $8g_1+30-5\alpha^2-7\beta^2$ is a nonnegative integer congruent to 18 modulo 48.
Put
$$
s=8g_1+30-5\alpha^2-7\beta^2
$$
and let $L=\langle 4,6,8\rangle$.
We assert that $s\ratwo L$.
One may easily check that $s$ is locally represented by $L$.
Note that the class number of $L$ is two and the genus mate is $M=\langle 2,4,24\rangle$.
Since $s$ is locally represented by $L$, we may assume that $s\ra M$.
Thus there is a vector $(x,y,z)\in \z^3$ such that
$$
s=2x^2+4y^2+24z^2.
$$
Since $s\equiv 0\Mod 3$, either $xy\not\equiv 0\Mod 3$ or $x\equiv y\equiv 0\Mod 3$ holds.
After changing sign of $y$ if necessary, we may assume that $x\equiv y\Mod 3$.
If we put $x=y-3x_1$, then
\begin{align*}
s&=2x^2+4y^2+24z^2\\
&=2(y-3x_1)^2+4y^2+24z^2\\
&=4(x_1+2z)^2+6(x_1-y)^2+8(x_1-z)^2.
\end{align*}
In the above equation, one may easily deduce
$$
x_1+2z\equiv x_1-y\equiv x_1-z\equiv 1\Mod 2
$$
from the fact that $s\equiv 2\Mod {16}$.
Thus we have $s\ratwo L$.
It follows immediately from this that
$$
8g_1+30\ratwo \langle 4,5,6,7,8\rangle,
$$
which is equivalent to $g_1\ra Y_4$.
On the other hand, one may directly check that $Y_4$ represents all integers from 4 to 829.
This completes the proof.
\end{proof}

\begin{thm} \label{thmg4}
For any integer $n$ exceeding 3, there are exactly two new tight $\mathcal T(n)$-universal triangular forms $X_n$ and $Y_n$.
\end{thm}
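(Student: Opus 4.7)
The plan is to combine Theorems \ref{thmxn} and \ref{thmyn}, which establish the tight $\mathcal{T}(n)$-universality of $X_n$ and $Y_n$, with the structural constraint from Lemma \ref{lemxy}. Since $2\notin \mathcal{P}_3$, the second assertion of that lemma applies, forcing any tight $\mathcal{T}(n)$-universal coefficient vector $\mathbf{a}$ (with entries sorted) to satisfy $\mathbf{x}_n\preceq \mathbf{a}$ or $\mathbf{y}_n\preceq \mathbf{a}$. The theorem then reduces to two checks: that $X_n$ and $Y_n$ are themselves new, and that no strict extension of $\mathbf{x}_n$ or $\mathbf{y}_n$ can be new.

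For newness, I would argue combinatorially. Any proper subsequence $\mathbf{b}\prec \mathbf{x}_n$ either drops one of the two copies of $n$ or deletes some entry $n+j$ with $1\le j\le n-1$; in either case $\mathbf{b}$ cannot contain $\mathbf{x}_n$, and since $\mathbf{x}_n$ has no entry equal to $2n$, it cannot contain $\mathbf{y}_n$ either. A symmetric inspection handles proper subsequences of $\mathbf{y}_n$: deleting $n$ leaves only one copy of the minimum, deleting $2n$ removes the only entry that can play the role of $2n$ in $\mathbf{y}_n$ (and $\mathbf{y}_n$ contains only one $n$, so it cannot contain $\mathbf{x}_n$ in the first place), and deleting an interior $n+j$ again blocks both containments. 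The contrapositive of Lemma \ref{lemxy} then gives $V'_{\mathcal{P}_3}(\mathbf{b})\neq \mathcal{T}(n)$, and since $\mathbf{b}\preceq \mathbf{x}_n$ or $\mathbf{b}\preceq \mathbf{y}_n$ implies $V'_{\mathcal{P}_3}(\mathbf{b})\subseteq \mathcal{T}(n)$ by Lemma \ref{lemeasy}(i), the inclusion is strict, matching the definition of ``new.'' For uniqueness, let $p_3(\mathbf{a})$ be any new tight $\mathcal{T}(n)$-universal form; Lemma \ref{lemxy} yields $\mathbf{x}_n\preceq \mathbf{a}$ or $\mathbf{y}_n\preceq \mathbf{a}$. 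If this containment is strict, the tight $\mathcal{T}(n)$-universality of $X_n$ (resp. $Y_n$) contradicts the newness of $\mathbf{a}$, forcing $\mathbf{a}=\mathbf{x}_n$ or $\mathbf{a}=\mathbf{y}_n$.

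No genuine obstacle remains at this stage: the substantive number-theoretic input (regularity of the auxiliary ternary triangular forms, the genus computation behind Proposition \ref{prop346}, and the case-by-case handling of small $n$) has already been packaged into Theorems \ref{thmxn} and \ref{thmyn}. The only point that warrants care is the combinatorial verification in the newness step, but this is immediate once the entry multiplicities of $\mathbf{x}_n$ and $\mathbf{y}_n$ are written out and compared.
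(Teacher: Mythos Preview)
Your proposal is correct and follows the same approach as the paper's own proof, which simply reads ``It follows immediately from Lemma \ref{lemxy} and Theorems \ref{thmxn}, \ref{thmyn}.'' You have merely unpacked the details the paper leaves implicit---in particular the combinatorial check that no proper subsequence of $\mathbf{x}_n$ or $\mathbf{y}_n$ can itself contain $\mathbf{x}_n$ or $\mathbf{y}_n$, which is what makes $X_n$ and $Y_n$ genuinely new and forces any other new form to coincide with one of them.
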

\begin{proof}
It follows immediately from Lemma \ref{lemxy} and Theorems \ref{thmxn},\ref{thmyn}.
\end{proof}



\begin{thebibliography}{abcd}

\bibitem {B} M. Bhargava, {\em On the Conway-Schneeberger fifteen theorem}, Contemp. Math. \textbf{272}(2000), 27-38.

\bibitem {BK} W. Bosma and B. Kane, {\em The triangular theorem of eight and representation by quadratic polynomials}, Proc. Amer. Math. Soc. \textbf{141}(2013), no. 5, 1473-1486. 

\bibitem {C} J.H. Conway, {\em Universal quadratic forms and the fifteen theorem}, Contemp. Math. \textbf{272}(2000), 23-26.

\bibitem{GS} F. Ge and Z.-W. Sun, {\em On some universal sums of generalized polygonal numbers}, Colloq. Math. \textbf{145}(2016), 149–155.

\bibitem {J} B.W. Jones, {\em Representation by positive ternary quadratic forms}, Ph.D. Thesis, University of Chicago, 1928.

\bibitem {Ju} J. Ju, {\em Almost-universal triangular sums missing a single value}, preprint.

\bibitem {Ju5} J. Ju, {\em Universal sums of generalized pentagonal numbers}, Ramanujan J. \textbf{51}(2020), no. 3, 479-494.

\bibitem {JuOh8} J. Ju, {\em Universal sums of generalized octagonal numbers}, J. Number theory \textbf{190}(2018), 292-302.

\bibitem{Dj} D. Kim, {\em Weighted sums of generalized polygonal numbers with coefficients 1 or 2}, preprint.

\bibitem{KO3} M. Kim and B.-K. Oh, {\em Regular ternary triangular forms}, J. Number Theory \textbf{214}(2020), 137-169.

\bibitem{KO6} M. Kim and B.-K. Oh, {\em Tight universal quadratic forms}, submitted.

\bibitem {Ki} Y. Kitaoka, {\em Arithmetic of quadratic forms}, Cambridge University Press, 1993.

\bibitem {OM} O.T. O'Meara, {\em Introduction to quadratic forms}, Springer Verlag, New York, 1963.

\bibitem {S} Z.-W. Sun, {\em A result similar to Lagrange's theorem}, J. Number Theorey \textbf{162}(2016), 190-211.

\end{thebibliography}
\end{document}